\theoremstyle{plain}
\newtheorem*{corollary}{Corollary}
\newtheorem{lemma}{Lemma}
\newtheorem{theorem}{Theorem}
\newtheorem*{conjecture}{Conjecture}
\theoremstyle{remark}
\newtheorem*{remark}{Remark}
\theoremstyle{definition}
\newtheorem{example}{Example}
\DeclareMathOperator{\Id}{Id}
\DeclareMathOperator{\ch}{char}
\DeclareMathOperator{\GL}{GL}
\DeclareMathOperator{\PGL}{PGL}
\DeclareMathOperator{\End}{End}
\DeclareMathOperator{\Aut}{Aut}
\DeclareMathOperator{\diag}{diag}
\DeclareMathOperator{\Alt}{Alt}
\DeclareMathOperator{\PIexp}{PIexp}
\begin{document}

\title{Algebras simple with respect to a Sweedler's algebra action}

\author{A.\,S.~Gordienko}
\address{Vrije Universiteit Brussel, Belgium}
\email{alexey.gordienko@vub.ac.be} 

\keywords{Associative algebra, polynomial identity, skew-derivation, Sweedler's algebra, $H$-module algebra, codimension.}

\begin{abstract}
Algebras simple with respect to an action of Sweedler's algebra $H_4$
deliver the easiest example of $H$-module algebras that are $H$-simple
but not necessarily semisimple.
We describe finite dimensional $H_4$-simple algebras and prove the analog of Amitsur's conjecture
for codimensions of their polynomial $H_4$-identities.
In particular, we show that the Hopf PI-exponent of an $H_4$-simple algebra $A$ 
over an algebraically closed field of characteristic $0$ equals $\dim A$.
The groups of automorphisms preserving the structure of an $H_4$-module algebra are studied as well.
\end{abstract}

\subjclass[2010]{Primary 16W22; Secondary 16R10, 16R50, 16T05, 16W25.}

\thanks{Supported by Fonds Wetenschappelijk Onderzoek~--- Vlaanderen Pegasus Marie Curie post doctoral fellowship (Belgium) and RFBR grant 13-01-00234a (Russia).}

\maketitle

The notion of an $H$-(co)module algebra is a natural generalization of the notion of a graded algebra,
an algebra with an action of a group by automorphisms, and an algebra with an action of a Lie algebra by derivations.
 In particular, if $H_4$ is the $4$-dimensional Sweedler's algebra, an $H_4$-module
algebra is an algebra endowed both with an action of the cyclic group of order $2$ and with a skew-derivation satisfying certain conditions. 

The theory of gradings on matrix algebras and simple Lie algebras is a well developed area~\cite{ BahtKochMont, BahturinZaicevSeghal}. Quaternion $H_4$-extensions and related crossed products
were considered in~\cite{DoiTakeuchi}.
Here we classify finite dimensional $H_4$-simple algebras over an algebraically closed field $F$ of characteristic $\ch F \ne 2$ (Sections~\ref{SectionSweedlerSimpleMatrix}--\ref{SectionSweedlerSimpleNonSemisimple}). In addition,
we describe their automorphism groups as an $H_4$-module algebras (Section~\ref{SectionSweedlerSimpleAut}).

Amitsur's conjecture on asymptotic behaviour of codimensions of ordinary polynomial
identities was proved by A.~Giambruno and M.\,V.~Zaicev~\cite[Theorem~6.5.2]{ZaiGia} in 1999.

Suppose an algebra is endowed with a grading, an action of a group $G$ by automorphisms and anti-automorphisms, an action of a Lie algebra by derivations or a structure of an $H$-module algebra for some Hopf algebra $H$. Then
it is natural to consider, respectively, graded, $G$-, differential or $H$-identities~\cite{BahtGiaZai, BahtZaiGradedExp, BahturinLinchenko,  BereleHopf, Kharchenko}.

The analog of Amitsur's conjecture for polynomial $H$-identities was proved under wide conditions
by the author in~\cite{ASGordienko8, ASGordienko9}. However, in those results the $H$-invariance of the Jacobson radical was required. 
Until now the Sweedler's algebra~\cite[Section 7.4]{ASGordienko3} with the standard action of its dual was the only example where the analog of Amitsur's conjecture was proved for an $H$-simple
non-semisimple algebra.
 In this article we prove the analog of Amitsur's conjecture for all finite dimensional $H_4$-simple algebras not necessarily semisimple (Section~\ref{SectionSweedlerSimpleAmitsur}) assuming that the base field is algebraically closed and of characteristic $0$.

\section{Introduction}

An algebra $A$
over a field $F$
is an \textit{$H$-module algebra}
for some Hopf algebra $H$
if $A$ is endowed with a homomorphism $H \to \End_F(A)$ such that
$h(ab)=(h_{(1)}a)(h_{(2)}b)$
for all $h \in H$, $a,b \in A$. Here we use Sweedler's notation
$\Delta h = h_{(1)} \otimes h_{(2)}$ where $\Delta$ is the comultiplication
in $H$.
We refer the reader to~\cite{Danara, Montgomery, Sweedler}
   for an account
  of Hopf algebras and algebras with Hopf algebra actions.

Let $A$ be an $H$-module algebra for some Hopf algebra $H$ over a field $F$.
We say that $A$ is \textit{$H$-simple} if $A^2\ne 0$ and $A$ has no non-trivial
two-sided $H$-invariant ideals. 

By $H_4=\langle 1, c, v, cv \rangle_F$ we denote Sweedler's Hopf algebra over a field $F$ of characteristic $\ch F \ne 2$  where $$\Delta(c)=c\otimes c$$
and $$\Delta(v) = c\otimes v + v\otimes 1.$$

Note that $\lbrace 1, c \rbrace \cong \mathbb Z_2$ and every space $V$ with $\mathbb Z_2$-action has the following natural $\mathbb Z_2$-grading:
$V = V^{(0)}\oplus V^{(1)}$ where $cv^{(0)} = v^{(0)}$ for all $v^{(0)} \in V^{(0)}$
and $c v^{(1)} = -v^{(1)}$ for all $v^{(1)} \in V^{(1)}$.

\section{$H_4$-action on matrix algebras}\label{SectionSweedlerSimpleMatrix}

In this section we treat the case when a finite dimensional $H_4$-simple algebra $A$ is simple as an algebra. In the case of an algebraically closed field $F$ this implies that $A$ is isomorphic to a full matrix algebra over $F$.

\begin{theorem}\label{TheoremSweedlerSimpleMatrix}
Let $A$ be an $H_4$-module algebra over an algebraically closed field $F$, $\ch F \ne 2$, isomorphic as an algebra to $M_n(F)$ for some $n\in \mathbb N$.
Then either  
\begin{enumerate}
\item
$A$ is isomorphic as an algebra and an $H_4$-module
 to
$$M_{k,m}(F):=M_{k,m}^{(0)}(F)\oplus M_{k,m}^{(1)}(F)\text{ (direct sum of subspaces)}$$
where $c a = a$ for all $a \in M_{k,m}^{(0)}(F)$,
$c a = -a$ for all $a \in M_{k,m}^{(1)}(F)$,
$$M_{k,m}^{(0)}(F) = \left( \begin{array}{cc}
 M_k(F) & 0 \\
 0 & M_m(F)\\
 \end{array}\right),$$
 $$M_{k,m}^{(1)}(F) = \left( \begin{array}{cc}
 0 & M_{k\times m}(F) \\
 M_{m\times k}(F) & 0\\
 \end{array}\right),$$
and $va=(ca)Q-Qa$ where $Q=\left( \begin{array}{cc}
 0 & Q_1 \\
 Q_2 & 0\\
 \end{array}\right)$, $Q_1 Q_2 = \alpha E_k$, $Q_2 Q_1 = \alpha E_m$
  for some $k,m > 0$ (here $E_n$ is the identity matrix $n\times n$), $k+m = n$, $k\geqslant m$, $Q_1 \in M_{k\times m}(F)$,
  $Q_2 \in M_{m\times k}(F)$, $\alpha \in F$,
  \item or $ca=a$, $va=0$ for all $a\in A$.
  \end{enumerate}
\end{theorem}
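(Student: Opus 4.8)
The plan is to reduce everything to classical structure theory of $M_n(F)$: the Skolem--Noether theorem for the group-like element $c$, and the vanishing of the first Hochschild cohomology of a separable algebra for the skew-primitive element $v$. After that, the relations $c^2=1$, $v^2=0$ and $cv=-vc$ of $H_4$ pin down all remaining data.

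First I would analyse the action of $c$. Since $\Delta(c)=c\otimes c$ and $c$ is invertible in $H_4$ with $c^{-1}=c$, the element $c$ acts on $A\cong M_n(F)$ as an algebra automorphism, hence, by Skolem--Noether, as $a\mapsto CaC^{-1}$ for some invertible $C\in M_n(F)$. From $c^2=1$ the matrix $C^2$ is central, so $C^2=\lambda E_n$ with $\lambda\ne 0$, and since $F$ is algebraically closed we may rescale $C$ so that $C^2=E_n$. Then $C$ is diagonalizable with eigenvalues in $\{+1,-1\}$; conjugating the fixed algebra isomorphism $A\cong M_n(F)$ by a suitable invertible matrix (and reordering the basis) we may assume $C=\diag(E_k,-E_m)$ with $k+m=n$, $k\geqslant m$. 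This is exactly the action of $c$ on $M_{k,m}(F)$, and it endows $A$ with the corresponding $\mathbb Z_2$-grading; if $C=\pm E_n$, i.e.\ $m=0$, the action of $c$ is trivial.

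Next I would handle $v$. Writing $\sigma$ for the automorphism $a\mapsto ca$, the identity $v(ab)=(ca)(vb)+(va)b$ coming from $\Delta(v)=c\otimes v+v\otimes 1$ says precisely that $a\mapsto va$ is a $\sigma$-derivation of $M_n(F)$, i.e.\ a derivation into the twisted bimodule with left action twisted by $\sigma$ and right action standard. As $M_n(F)$ is separable, every such derivation is inner, so $va=(ca)Q-Qa$ for some $Q\in M_n(F)$, determined modulo $F\cdot C$ (from $(ca)R-Ra=0$ for all $a$ one gets $R\in F\cdot C$). Now impose the relation $cv=-vc$: evaluating $c(va)=-v(ca)$ for all $a$ and using $C^{-1}=C$ one finds, after a short manipulation, that $QC+CQ$ is central, say $QC+CQ=\mu E_n$; replacing $Q$ by $Q-\tfrac{\mu}{2}C$ (which does not change $v$; here $\ch F\ne 2$ is used) we may assume $QC+CQ=0$. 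Hence $Q$ anticommutes with $C=\diag(E_k,-E_m)$, so $Q$ is block anti-diagonal, $Q=\left(\begin{smallmatrix}0 & Q_1\\ Q_2 & 0\end{smallmatrix}\right)$ with $Q_1\in M_{k\times m}(F)$, $Q_2\in M_{m\times k}(F)$; equivalently $cQ=-Q$, i.e.\ $Q\in M_{k,m}^{(1)}(F)$. Finally, using $\sigma^2=\id$ and $\sigma(Q)=-Q$, a direct computation gives $v^2(a)=Q^2a-aQ^2$ for all $a$, so $v^2=0$ is equivalent to $Q^2\in F\cdot E_n$, say $Q^2=\alpha E_n$; in block form $Q^2=\diag(Q_1Q_2,\,Q_2Q_1)$, which is exactly $Q_1Q_2=\alpha E_k$ and $Q_2Q_1=\alpha E_m$.

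Assembling the pieces: when $m>0$ this is precisely the $H_4$-module algebra $M_{k,m}(F)$ of alternative~(1); when $m=0$ the grading is trivial, $Q\in M_{k,0}^{(1)}(F)=0$, so $va=0$ for all $a$ and we are in alternative~(2) (in this case one also sees directly that $cv=-vc$ with $c$ acting trivially forces $2va=0$). Conversely, the formulas in~(1) always define an $H_4$-module algebra: $a\mapsto(ca)Q-Qa$ is a $\sigma$-skew-derivation for any $Q$, the conditions $QC=-CQ$ and $Q^2=\alpha E_n$ give $cv=-vc$ and $v^2=0$ respectively, and $c^2=1$ is clear. The only non-elementary inputs are Skolem--Noether and the innerness of skew-derivations of a separable algebra; the one step demanding care is the bookkeeping of $cv=-vc$, precisely because $Q$ is pinned down only up to the scalar multiple of $C$ that is needed to normalize $\mu$ to $0$.
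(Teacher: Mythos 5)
Your proposal is correct and follows essentially the same route as the paper: Skolem--Noether for $c$ giving $C=\diag(E_k,-E_m)$, innerness of the skew-derivation, normalization of $Q$ modulo $F\cdot C$ via $cv=-vc$ (using $\ch F\ne 2$), and $Q^2=\alpha E_n$ from $v^2=0$. The only cosmetic difference is that you invoke vanishing of twisted Hochschild cohomology for the separable algebra $M_n(F)$ to get innerness of the $\sigma$-derivation, whereas the paper reduces to an ordinary derivation by observing that $P^{-1}(v(\cdot))$ is a derivation.
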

\begin{proof}
Note that (see e.g, \cite[Section~3.5]{ZaiGia}) 
$A$ is isomorphic as a graded algebra to
$M_{k,m}(F)$ for some $k \geqslant m$, $k+m = n$.

 Moreover $v$ is acting on $A$ by a skew-derivation.
We claim\footnote{This result is a ``folklore'' one. I am grateful to V.\,K.~Kharchenko who informed me of a simple proof of it.} that this skew-derivation is \textit{inner},  i.e. there exists a matrix $Q \in A$ such that
$va = (ca)Q-Qa$ for all $a\in A$. Indeed, $ca = PaP^{-1}$
for all $a\in A$
where  $$P=\diag(\underbrace{1,1,\ldots, 1}_k, 
\underbrace{-1,-1,\ldots, -1}_m).$$
 Moreover $$P^{-1}(v(ab)) = P^{-1}((ca)(vb)+(va)b)
= P^{-1}((PaP^{-1})(vb)+(va)b)=a(P^{-1}(vb))+(P^{-1}(va))b$$
for all $a,b \in A$. Hence $P^{-1}(v(\cdot))$
is a derivation and $P^{-1}(va)=aQ_0-Q_0a$
for all $a\in A$ for some $Q_0 \in A$.
 Thus $$va = PaQ_0-PQ_0a= (PaP^{-1})PQ_0-PQ_0a= (ca)Q-Qa
 \text{ for all }a\in A$$ where $Q=PQ_0$, i.e. $v$ acts as an inner skew-derivation.
 
 Note that $vc=-cv$ implies $a(Q+cQ)=(Q+cQ)(ca)$ for all $a\in A$
 and $$Q+cQ= \beta\left( \begin{array}{cc}
 E_k &  0 \\
 0 & -E_m\\
 \end{array}\right)\text{ for some }\beta \in F.$$ Hence we may assume that $Q+cQ = 0$ and $Q=\left( \begin{array}{cc}
 0 & Q_1 \\
 Q_2 & 0
 \end{array}\right)$ since $$(ca)\left( \begin{array}{cc}
 E_k &  0 \\
 0 & -E_m\\
 \end{array}\right) - \left( \begin{array}{cc}
 E_k &  0 \\
 0 & -E_m\\
 \end{array}\right) a = 0 \text{ for all }a\in A.$$ 
 
 Recall that $v^2=0$. Therefore, for every $a\in A$, we
 have $$v^2 a = v((ca)Q-Qa)=(a(cQ)-(cQ)(ca))Q-Q((ca)Q-Qa)=Q^2 a-a Q^2$$
and $Q^2=\alpha E_n$ for some $\alpha \in F$.
Hence  $Q^2=\left( \begin{array}{cc}
 Q_1 Q_2 & 0\\
 0       & Q_2 Q_1
 \end{array}\right) = \left( \begin{array}{cc}
 \alpha E_k & 0\\
 0       & \alpha E_m
 \end{array}\right)$.
\end{proof}

\begin{remark}
Conversely, for every such $Q$, $k \geqslant m$, we have an 
$H_4$-module algebra isomorphic as an algebra to $M_n(F)$.
\end{remark}

\begin{remark}
If $k > m$, then $Q_1 Q_2$ is degenerate and we must have $Q^2=0$.
\end{remark}

\begin{theorem} \label{TheoremSweedlerSimpleMatrixIso}
Let $A_1$ and $A_2$ be $H_4$-module algebras over a field $F$, $\ch F \ne 2$,
$A_i=A_i^{(0)}\oplus A_i^{(1)}$ 
 (direct sum of subspaces)
where $c a = a$ for all $a \in A_i^{(0)}$,
$c a = -a$ for all $a \in A_i^{(1)}$,
$$A_i^{(0)} = \left( \begin{array}{cc}
 M_{k_i}(F) & 0 \\
 0 & M_{m_i}(F)\\
 \end{array}\right),$$
 $$A_i^{(1)} = \left( \begin{array}{cc}
 0 & M_{k_i\times m_i}(F) \\
 M_{m_i\times k_i}(F) & 0\\
 \end{array}\right),$$
and $va=(ca)Q_i-Q_ia$ for all $a\in A_i$, where $Q_i=\left( \begin{array}{cc}
 0 & Q_{i1} \\
 Q_{i2} & 0\\
 \end{array}\right)$,  $Q_{i1} Q_{i2} = \alpha_i E_{k_i}$, $Q_{i2} Q_{i1} = \alpha_i E_{m_i}$
  for some $k_i,m_i > 0$, $k_i\geqslant m_i$, $Q_{i1} \in M_{k_i\times m_i}(F)$,
  $Q_{i2} \in M_{m_i\times k_i}(F)$, $\alpha_i\in F$, $i=1,2$.
Then $A_1 \cong A_2$ as algebras and $H_4$-modules if and only if $k_1 = k_2$, $m_1 = m_2$
and 
there exist $W_1 \in \GL_{k_1}(F)$, $W_2 \in \GL_{m_1}(F)$
such that
\begin{enumerate}\item
 either $Q_{21}=W_1Q_{11}{W_2}^{-1}$ and $Q_{22}=W_2 Q_{12}{W_1}^{-1}$
or 
\item $k_1=m_1$,
$Q_{21}=W_1Q_{12}{W_2}^{-1}$, and $Q_{22}=W_2 Q_{11}{W_1}^{-1}$.
\end{enumerate}
\end{theorem}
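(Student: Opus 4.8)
The plan is to reduce the whole question to the existence of a single conjugating matrix. Comparing dimensions gives $k_1+m_1=k_2+m_2=:n$, and since $A_1$ and $A_2$ are literally $M_n(F)$ as algebras, by the Skolem--Noether theorem any algebra isomorphism $\varphi\colon A_1\to A_2$ has the form $\varphi(a)=TaT^{-1}$ for some $T\in\GL_n(F)$. Writing $P_i=\diag(\underbrace{1,\dots,1}_{k_i},\underbrace{-1,\dots,-1}_{m_i})$, the $c$-action on $A_i$ is conjugation by $P_i$ and the $v$-action is the inner skew-derivation $\delta_{Q_i}\colon a\mapsto (ca)Q_i-Q_ia$. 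So the task becomes: decide when $T$ can be chosen so that conjugation by $T$ intertwines both the $c$- and the $v$-actions.

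\emph{Step 1 (the $c$-condition).} First I would translate $\varphi\circ c=c\circ\varphi$ into the statement that $P_2^{-1}TP_1$ is central, hence $TP_1T^{-1}=\lambda P_2$ for some $\lambda\in F^\times$; squaring and using $P_i^2=E_n$ forces $\lambda=\pm 1$. If $\lambda=1$ then $T$ maps the $(\pm 1)$-eigenspace of $P_1$ onto the $(\pm 1)$-eigenspace of $P_2$, so $k_1=k_2$, $m_1=m_2$ and $T=\diag(W_1,W_2)$ with $W_1\in\GL_{k_1}(F)$, $W_2\in\GL_{m_1}(F)$. If $\lambda=-1$ then $T$ swaps those eigenspaces, so $k_1=m_2$, $m_1=k_2$, which together with $k_i\geqslant m_i$ forces $k_1=m_1=k_2=m_2$ and makes $T$ block anti-diagonal with invertible off-diagonal blocks $W_1,W_2$. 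In both cases $k_1=k_2$ and $m_1=m_2$.

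\emph{Step 2 (the $v$-condition).} The key point is that once $\varphi$ intertwines the $c$-actions, conjugation by $\varphi$ carries $\delta_{Q_1}$ to $\delta_{\varphi(Q_1)}$, because $\varphi\bigl((c\varphi^{-1}(b))Q_1-Q_1\varphi^{-1}(b)\bigr)=(cb)\varphi(Q_1)-\varphi(Q_1)b$. Hence $\varphi$ intertwines the $v$-actions if and only if $\delta_{Q_2-\varphi(Q_1)}=0$. Next I would note that $\delta_R=0$ forces $P_2^{-1}R$ to be central, i.e. $R\in FP_2$; but $R:=Q_2-\varphi(Q_1)$ is odd for the $\mathbb{Z}_2$-grading of $A_2$ (both $Q_2$ and $\varphi(Q_1)$ are odd, the latter because $cQ_1=-Q_1$ and $\varphi$ is a $c$-map), whereas $P_2$ is even and nonzero; therefore $R=0$. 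So the $v$-compatibility is exactly the single equation $TQ_1T^{-1}=Q_2$.

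\emph{Step 3 (reading off the two alternatives) and converse.} Substituting the block forms of $T$ from Step 1 into $TQ_1T^{-1}=Q_2$ finishes the ``only if'' direction: for $\lambda=1$ the $(1,2)$- and $(2,1)$-blocks give $Q_{21}=W_1Q_{11}W_2^{-1}$ and $Q_{22}=W_2Q_{12}W_1^{-1}$, i.e. alternative (1); for $\lambda=-1$ (so $k_1=m_1$) the anti-diagonal $T$ gives $Q_{21}=W_1Q_{12}W_2^{-1}$ and $Q_{22}=W_2Q_{11}W_1^{-1}$, i.e. alternative (2). For the converse, given $W_1,W_2$ realising (1) (resp. (2)) one builds the block-diagonal (resp. block anti-diagonal) matrix $T$, checks directly that $TP_1T^{-1}=P_2$ (resp. $-P_2$) and $TQ_1T^{-1}=Q_2$, and runs the computations of Steps 1--2 in reverse to see that $a\mapsto TaT^{-1}$ is an isomorphism of algebras and $H_4$-modules. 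I expect the only genuinely delicate point to be the parity argument in Step 2: without it one would only get $\varphi(Q_1)=Q_2+\mu P_2$, and it is the oddness of the $Q_i$ that pins down the scalar $\mu=0$ and yields the clean classification.
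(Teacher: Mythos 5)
Your proposal is correct and follows essentially the same route as the paper: Skolem--Noether gives $\varphi(a)=TaT^{-1}$, the $c$-condition forces $TP_1T^{-1}=\pm P_2$ and hence the block-diagonal/anti-diagonal dichotomy for $T$, and the $v$-condition reduces to $TQ_1T^{-1}=Q_2+\mu P_2$ with $\mu$ killed because the left-hand side is odd while $P_2$ is even. Your Step 2 merely repackages the paper's explicit block computation (where $\beta=0$ is read off from the zero diagonal of $Q_2$ and $WQ_1W^{-1}$) as a statement about conjugating inner skew-derivations, which is a cleaner but not substantively different argument.
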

\begin{proof} First we fix some isomorphisms $A_i \cong M_n(F)$, $i=1,2$, where
$n=k_i+m_i$.
Let $\varphi \colon A_1 \to A_2$ be an isomorphism of algebras and $H_4$-modules.
Then there exists $W\in\GL_n(F)$ such that $\varphi(a)=WaW^{-1}$
for all $a\in A_1$. Let $$P_i=\diag(\underbrace{1,1,\ldots, 1}_{k_i}, 
\underbrace{-1,-1,\ldots, -1}_{m_i}).$$
Then $\varphi(ca)=\varphi(P_1 a P_1^{-1})= c\varphi(a)= P_2 \varphi(a) P_2^{-1}$,
i.e. $WP_1 a (WP_1)^{-1}=P_2 W a (P_2 W)^{-1}$ for all $a\in A_1$.
Hence $W^{-1}P_2^{-1}WP_1$ commutes with any matrix.
Thus $W^{-1}P_2^{-1}WP_1 = \alpha E_n$ for some $\alpha \in F$.
Hence $P_1 = \alpha W^{-1} P_2 W$. Taking into account the eigenvalues of $P_1$ and $P_2$,
we get $\alpha = \pm 1$. In particular, $P_1=P_2$, $k_1 = k_2$ and $m_1 = m_2$.
 Note that $\varphi(va)=v\varphi(a)$ for all $a\in A_1$.
 Therefore $$W(P_1 a P_1^{-1} Q_1- Q_1 a)W^{-1}= P_1 WaW^{-1} P_1^{-1} Q_2- Q_2 WaW^{-1},$$
\begin{equation}\label{EqQW}a P_1^{-1} Q_1- P_1^{-1} Q_1 a= P_1^{-1}W^{-1}P_1 WaW^{-1} P_1^{-1} Q_2W- P_1^{-1}W^{-1}Q_2 Wa.\end{equation}

 If $\alpha = 1$, then $W P_1 = P_1 W$ and $W=\left(\begin{array}{cc}
 W_1 & 0 \\  0 & W_2 \end{array} \right)$ for some $W_1\in \GL_{k_1}(F)$, $W_2\in \GL_{m_1}(F)$.
 In this case,
 $$a P_1^{-1} Q_1- P_1^{-1} Q_1 a= aP_1^{-1}W^{-1}  Q_2W- P_1^{-1}W^{-1}Q_2 Wa,$$
 $$a (P_1^{-1} Q_1- P_1^{-1}W^{-1} Q_2W)= (P_1^{-1} Q_1- P_1^{-1}W^{-1}Q_2 W)a$$
 for all $a\in A_1$. Thus $(P_1^{-1} Q_1- P_1^{-1} W^{-1}Q_2W)$
 commutes with any $n\times n$ matrix and $P_1^{-1} Q_1- P_1^{-1}W^{-1} Q_2W = \beta E_n$
 for some $\beta \in F$, i.e. $Q_2 = W Q_1 W^{-1} - \beta P_1$.
 However, $Q_2$ and $W Q_1 W^{-1}$ have zeros on the main diagonal.
 Therefore $\beta = 0$ and  $Q_2 = W Q_1 W^{-1}$. In particular, 
$Q_{21}=W_1Q_{11}{W_2}^{-1}$ and $Q_{22}=W_2 Q_{12}{W_1}^{-1}$.

If $\alpha = -1$, then $k_1=m_1$, $P_1 W P_1^{-1} = -W$, and $W=\left(\begin{array}{cc}
 0 & W_1 \\  W_2 & 0 \end{array} \right)$ for some $W_1, W_2 \in \GL_{k_1}(F)$.
 Again~(\ref{EqQW}) implies $$a P_1^{-1} Q_1- P_1^{-1} Q_1 a= aP_1^{-1}W^{-1}  Q_2W - P_1^{-1}W^{-1}Q_2 Wa,$$ and $Q_2 = W Q_1 W^{-1} - \beta P_1$ for  $\beta \in F$.
 Note that $$W Q_1 W^{-1} = \left(\begin{array}{cc}
 0 & W_1 \\  W_2 & 0 \end{array} \right) \left(\begin{array}{cc}
 0 & Q_{11} \\  Q_{12} & 0 \end{array} \right)
 \left(\begin{array}{cc}
 0 & W_2^{-1} \\  W_1^{-1} & 0 \end{array} \right) = \left(\begin{array}{cc}
 0 & W_1 Q_{12} W_2^{-1} \\  W_2 Q_{11} W_1^{-1} & 0 \end{array} \right).$$
 Since the matrix $\beta P_1$ is diagonal, $\beta = 0$, $Q_{21}=W_1Q_{12}{W_2}^{-1}$, and $Q_{22}=W_2 Q_{11}{W_1}^{-1}$.

The direct assertion of the theorem is proved. Reversing the implications yields the converse.
 \end{proof}
 
 \begin{example}
 In the case of $2\times 2$ matrices we have the following variants:
 \begin{enumerate}
 \item $A = A^{(0)}= M_2(F)$, $A^{(1)}= 0$, $ca=a$, $va=0$ for all $a\in A$;
 \item $A = A^{(0)} \oplus A^{(1)}$ where $$A^{(0)}=\left\lbrace\left(\begin{array}{cc}
\alpha & 0 \\
0      & \beta 
 \end{array}\right) \mathbin{\biggl|} \alpha,\beta \in F\right\rbrace$$
 and $$A^{(1)}=\left\lbrace\left(\begin{array}{cc}
0 & \alpha \\
\beta      & 0 
 \end{array}\right) \mathbin{\biggl|} \alpha,\beta \in F\right\rbrace,$$
 $ca=(-1)^{i}a$, $va=0$ for $a\in A^{(i)}$;
  \item $A = A^{(0)} \oplus A^{(1)}$ where $$A^{(0)}=\left\lbrace\left(\begin{array}{cc}
\alpha & 0 \\
0      & \beta 
 \end{array}\right) \mathbin{\biggl|} \alpha,\beta \in F\right\rbrace$$
 and $$A^{(1)}=\left\lbrace\left(\begin{array}{cc}
0 & \alpha \\
\beta      & 0 
 \end{array}\right) \mathbin{\biggl|} \alpha,\beta \in F\right\rbrace,$$
 $ca=(-1)^{i}a$, $va = (ca)Q-Qa$ for $a\in A^{(i)}$ where $Q=\left(\begin{array}{cc}
0 & 1 \\
\gamma      & 0 
 \end{array}\right)$ and $\gamma \in F$ is a fixed number.
  \end{enumerate}
 \end{example}

\section{Semisimple $H_4$-simple algebras}

In this section we treat the case when an $H_4$-simple algebra $A$ is semisimple.

\begin{theorem}\label{TheoremSweedlerSimpleSemisimple}
Let $A$ be a semisimple $H_4$-simple algebra over an algebraically closed field $F$. Then either
\begin{enumerate}
\item
 $A$ is isomorphic to the full matrix algebra $M_k(F)$
for some $k \geqslant 1$ or
\item
 $A \cong M_k(F) \oplus M_k(F)$ (direct sum of ideals) for some $k \geqslant 1$
and there exists $P \in M_k(F)$ such that $P^2=\alpha E_k$ for some $\alpha \in F$, and
\begin{equation}\label{EqSSSweedlerSimple} c\, (a, b) = (b,a),\qquad v\,(a,b)=(Pa-bP,aP-Pb)\end{equation}
for all $a,b \in M_k(F)$.
\end{enumerate}
\end{theorem}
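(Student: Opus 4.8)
The plan is to use semisimplicity to reduce everything to a computation with central idempotents. Write $A=B_1\oplus\dots\oplus B_r$ as the direct sum of its minimal two-sided ideals, so each $B_i$ is a full matrix algebra over $F$ with central idempotent $e_i$. Since $c$ is group-like with $c^2=1$, it acts as an algebra automorphism of $A$, hence fixes $1=e_1+\dots+e_r$ and permutes the $B_i$ by an involution; applying the skew-derivation rule to $1=1\cdot 1$ and using $\ch F\ne 2$ gives $v(1)=0$. Throughout I would also use the $H_4$-relations $v^2=0$ and $vc=-cv$.

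First I would show that $c(e_i)=e_i$ forces $v(e_i)=0$: expanding $v(e_i)=v(e_i^2)=c(e_i)v(e_i)+v(e_i)e_i=e_i v(e_i)+v(e_i)e_i$ and decomposing $v(e_i)$ along the components, centrality of the $e_j$ gives $v(e_i)=2(\text{its }B_i\text{-part})$, hence $v(e_i)=0$ (again using $\ch F\ne 2$). Then $v(B_i)=v(e_iA)=e_i v(A)\subseteq B_i$, so $B_i$ is an $H_4$-invariant ideal and therefore $B_i=A$. Hence if $c$ fixes some component then $r=1$ and $A\cong M_k(F)$: this is case~(1). Otherwise $c$ is a fixed-point-free involution on the components; choosing a transposed pair $B_1,B_2$ (so $c$ restricts to an isomorphism $B_1\to B_2$, forcing $B_1\cong B_2\cong M_k(F)$) and repeating the idempotent computation with $f=e_1+e_2$ yields $v(f)=0$, whence $v(B_1\oplus B_2)=f\,v(A)\subseteq B_1\oplus B_2$; since $B_1\oplus B_2$ is also $c$-invariant, $A=B_1\oplus B_2\cong M_k(F)\oplus M_k(F)$ with $c$ interchanging the two factors.

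In that case I would then normalise $c$ and compute $v$. Identifying $A$ with $M_k(F)\times M_k(F)$, the factor-swapping automorphism $c$ has the form $c(a,b)=(\phi(b),\phi^{-1}(a))$ with $\phi\in\Aut(M_k(F))$, which is inner by Skolem--Noether; transporting the $H_4$-structure along the algebra automorphism $(a,b)\mapsto(a,\phi(b))$ reduces us to $c(a,b)=(b,a)$. Setting $v(e_1)=(x,y)$ (so $v(e_2)=-v(e_1)$, because $v(1)=0$) and expanding the skew-derivation identity on the products $e_1(a,0)$, $(a,0)e_1$, $e_2(0,b)$, $(0,b)e_2$ pins down $v(a,0)=(xa,ay)$ and $v(0,b)=(-bx,-yb)$; matching $v(c(a,0))$ against $-c(v(a,0))$ forces $x=y=:P$, so $v(a,b)=(Pa-bP,\,aP-Pb)$. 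Finally $0=v^2(a,b)=(P^2a-aP^2,\,P^2b-bP^2)$ for all $a,b$ gives $P^2=\alpha E_k$, which is exactly case~(2).

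The conceptual heart is the idempotent argument showing that $H_4$-simplicity permits at most one fixed component and, in the fixed-point-free case, exactly one transposed pair; once that is in place the rest is bookkeeping. The part I expect to be fussiest is the last paragraph: choosing precisely the right one-sided products $e_i\cdot(\,\cdot\,)$ and $(\,\cdot\,)\cdot e_i$ to expand so that $v$ is fully determined, and then cleanly extracting the two constraints $x=y$ (from $vc=-cv$) and $P^2\in F\cdot E_k$ (from $v^2=0$).
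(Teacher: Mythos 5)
Your proof is correct and follows the same overall strategy as the paper: use the twisted Leibniz rule on products with the identity elements of ideal summands to show those summands are $H_4$-invariant, thereby forcing $A$ to be either simple or a $c$-swapped pair of matrix algebras, and then pin down $v$ by evaluating it on products with central idempotents and imposing $vc=-cv$ and $v^2=0$. The only genuine difference is in the first half: the paper decomposes $A$ into $\mathbb{Z}_2$-simple ideals (each automatically $v$-invariant since $vb=v(1_Bb)=(c1_B)(vb)+(v1_B)b\in B$) and then quotes the standard structure of $\mathbb{Z}_2$-simple semisimple algebras, which already contains both the dichotomy and the normal form $c(a,b)=(b,a)$; you instead work directly with the Wedderburn components and their central idempotents, prove $v(e)=0$ for $c$-fixed central idempotents, and carry out the normalization of $c$ to the flip yourself, which makes the argument more self-contained at the cost of extra bookkeeping (note that the appeal to Skolem--Noether there is superfluous: transporting along $(a,b)\mapsto(a,\phi^{-1}(b))$ turns $c$ into the flip whether or not $\phi$ is inner). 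Your determination of $v$ from $v(e_1)=(x,y)$ is equivalent to the paper's computation with the four maps $\rho_{ij}$, and both extract $x=y=P$ from $vc=-cv$ and $P^2=\alpha E_k$ from $v^2=0$ in the same way.
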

\begin{proof}
If $A$ is semisimple, then $A$ is the direct sum of $\mathbb Z_2$-simple subalgebras.
Let $B$ be one of such subalgebras.
Then $vb= v(1_B b)=(c1_B)(vb)+(v1_B)b \in B$
for all $b\in B$. Hence $B$ is an $H_4$-submodule, $A=B$,
and $A$ is a $\mathbb Z_2$-simple algebra.
Therefore,  either $A$ is isomorphic to the full matrix algebra $M_k(F)$
for some $k \geqslant 1$ or $A \cong M_k(F) \oplus M_k(F)$ (direct sum of ideals) for some $k \geqslant 1$
and $c\, (a, b) = (b,a)$ for all $a,b \in M_k(F)$.

In the first case, the theorem is proved. 
Consider the second case.
Let $\pi_1, \pi_2 \colon A \to M_k(F)$ be the natural projections on the first
and the second component, respectively. Consider $\rho_{ij} \in \End_F(M_k(F))$, $i,j 
\in \lbrace 1,2 \rbrace$,
defined by $\rho_{i1}(a):= \pi_i(v\,(a,0))$
and $\rho_{i2}(a):= \pi_i(v\,(0,a))$ for $a \in M_k(F)$.
Then \begin{equation*}\begin{split}\rho_{11}(ab)=\pi_1(v\,(ab,0))=\pi_1(v((a,0)(b,0)))=\\
\pi_1(c(a,0)\,v(b,0))+\pi_1((v(a,0))\,(b,0))= \\
\pi_1((0,a)\,v(b,0))+\pi_1(v(a,0))b=\rho_{11}(a)b
\end{split}\end{equation*}
for all $a,b \in M_k(F)$.
Analogously, $\rho_{12}(ab)=a\rho_{12}(b)$,
$\rho_{21}(ab)=a\rho_{21}(b)$, $\rho_{22}(ab)=\rho_{22}(a)b$
for all $a,b \in M_k(F)$.

Let $\rho_{11}(E_k)=P$, $\rho_{12}(E_k)=Q$, $\rho_{21}(E_k)=W$, $\rho_{22}(E_k)=T$ where
$P,Q,W,T \in M_k(F)$.
Then $$v(a,b)=(\pi_1(v\,(a,b)), \pi_2(v\,(a,b)))=
(\rho_{11}(a)+\rho_{12}(b), \rho_{21}(a)+\rho_{22}(b))
=(Pa+bQ, aW+Tb).$$
Now we notice that $$0=v((E_k,0)(0,E_k))=c(E_k,0)v(0,E_k)+(v(E_k,0))(0,E_k)=(0, T+W).$$
Thus $T=-W$.
Analogously, 
$$0=v((0,E_k)(E_k,0))=c(0,E_k)v(E_k,0)+(v(0,E_k))(E_k,0)=(P+Q, 0).$$
Hence $Q = - P$.
Moreover, $(W,P)=cv(E_k,0)=-vc(E_k,0)=-v(0,E_k)=(-Q,-T)$.
Therefore, $T=-P$, $W=P$,
and~(\ref{EqSSSweedlerSimple}) holds.

Note that $v^2(a,b)=([P^2,a], [P^2,b])$ for all $a,b\in M_k(F)$
where $[x,y]:=xy-yx$.
Since $v^2=0$, we obtain that $P^2$ is a scalar matrix
 and the theorem is proved.
\end{proof}
\begin{remark}
Conversely, for every matrix $P \in M_k(F)$
such that $P^2=\alpha E_k$ for some $\alpha \in F$,
we can define the structure of an $H_4$-simple algebra on 
$M_k(F) \oplus M_k(F)$ by~(\ref{EqSSSweedlerSimple}), which is even $\mathbb Z_2$-simple.
\end{remark}
\begin{theorem}\label{TheoremSweedlerSimpleSSIso}
Let $A_1 = M_k(F) \oplus M_k(F)$ be a semisimple $H_4$-simple algebra over a field $F$
defined by a matrix $P_1 \in M_k(F)$,
$P_1^2=\alpha_1 E_k$ for some $\alpha_1 \in F$, by~(\ref{EqSSSweedlerSimple}),
and let $A_2$ be another such algebra defined by a matrix $P_2 \in M_k(F)$.
Then $A_1 \cong A_2$ as algebras and $H_4$-modules
if and only if $P_2 = \pm Q P_1 Q^{-1}$
for some $Q \in \GL_k(F)$.
\end{theorem}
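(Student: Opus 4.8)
The plan is to prove the two implications separately: for sufficiency one writes down explicit isomorphisms, and for necessity one extracts the conjugating matrix from a general isomorphism, exactly as was done for the matrix case in Theorem~\ref{TheoremSweedlerSimpleMatrixIso}.

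\emph{Sufficiency.} Suppose first $P_2 = Q P_1 Q^{-1}$ with $Q \in \GL_k(F)$. I would check that
$$\varphi(a,b) = (QaQ^{-1},\, QbQ^{-1})$$
is an isomorphism of algebras and $H_4$-modules $A_1 \to A_2$: it is visibly an algebra isomorphism, it commutes with the action of $c$ since $c$ merely interchanges the two components, and $v$-equivariance is a one-line substitution using $P_2 Q = Q P_1$ in~(\ref{EqSSSweedlerSimple}). If instead $P_2 = -Q P_1 Q^{-1}$, the same role is played by $\psi(a,b) = (QbQ^{-1},\, QaQ^{-1})$, where the sign change is absorbed precisely by the interchange of the two components built into $\psi$; again algebra-, $c$- and $v$-equivariance follow by a direct computation with~(\ref{EqSSSweedlerSimple}).

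\emph{Necessity.} Let $\varphi \colon A_1 \to A_2$ be an isomorphism of algebras and $H_4$-modules. The summands $M_k(F)\oplus 0$ and $0\oplus M_k(F)$ are exactly the minimal nonzero two-sided ideals of $A_i$ (equivalently, $(E_k,0)$ and $(0,E_k)$ are the primitive central idempotents), so $\varphi$ either preserves them or interchanges them. Since every algebra isomorphism $M_k(F)\to M_k(F)$ is a conjugation, restricting $\varphi$ to the components gives, in the first case, $\varphi(a,b) = (QaQ^{-1}, RbR^{-1})$ and, in the second, $\varphi(a,b) = (RbR^{-1}, QaQ^{-1})$ for some $Q, R \in \GL_k(F)$. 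Comparing $c\varphi(a,b)$ with $\varphi\bigl(c(a,b)\bigr)=\varphi(b,a)$ forces $QbQ^{-1} = RbR^{-1}$ for all $b$, i.e. $R^{-1}Q$ is central, so we may take $R=Q$; hence $\varphi(a,b)=(QaQ^{-1},QbQ^{-1})$ or $\varphi(a,b)=(QbQ^{-1},QaQ^{-1})$. It then remains to impose $v$-equivariance, and it suffices to test it on the elements $(a,0)$: comparing the first components of $v\varphi(a,0)$ and $\varphi\bigl(v(a,0)\bigr)$ yields $P_2 Q a Q^{-1} = Q P_1 a Q^{-1}$ in the component-preserving case and $-QaQ^{-1}P_2 = QaP_1Q^{-1}$ in the component-swapping case, whence, taking $a=E_k$, $P_2 = QP_1Q^{-1}$ or $P_2 = -QP_1Q^{-1}$ respectively. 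That the remaining components and the elements $(0,b)$ impose nothing new is automatic (or follows a posteriori from the sufficiency direction), and the equivalence is established.

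\emph{Main obstacle.} There is no serious difficulty here; the only thing needing care is organising the argument around the dichotomy ``$\varphi$ preserves versus interchanges the two matrix blocks'', and observing that the scalar ambiguity in representing the block restrictions of $\varphi$ as conjugations is harmless — indeed it is precisely this interchange of blocks that both produces and accounts for the sign $\pm$ in $P_2 = \pm Q P_1 Q^{-1}$.
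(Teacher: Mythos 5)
Your proposal is correct and follows essentially the same route as the paper: identify the two simple ideals so that $\varphi$ either preserves or swaps the blocks, write each block restriction as an inner conjugation, use $c$-equivariance to reduce to a single matrix $Q$, and then evaluate $v$-equivariance at the idempotent $(E_k,0)$ to extract $P_2=\pm QP_1Q^{-1}$. Your sufficiency direction is also the same explicit check the paper alludes to, just written out in slightly more detail.
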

\begin{proof}
Note that in each of $A_1$ and $A_2$ there exist exactly two simple ideals 
isomorphic to $M_k(F)$. Moreover, each isomorphism of $M_k(F)$ is inner.
Therefore, if $\varphi \colon A_1 \to A_2$ is an isomorphism
of algebras and $H_4$-modules, then there exist 
matrices $Q, T \in \GL_k(F)$ such that
either $\varphi(a,b)=(Q a Q^{-1}, T b T^{-1})$
for all $a,b \in M_k(F)$ or 
$\varphi(a,b)=(Q b Q^{-1}, T a T^{-1})$
for all $a,b \in M_k(F)$.

Suppose $\varphi(a,b)=(Q a Q^{-1}, T b T^{-1})$
for all $a,b \in M_k(F)$.
Note that $c\varphi = \varphi c$ and $v\varphi = \varphi v$.
The first equality implies that $Q$ and $T$ are proportional, i.e without loss of generality we may assume that $Q=T$.
The second equality implies $(P_2, P_2) =v(E_k, 0)= v\varphi(E_k,0)=\varphi(v(E_k,0))=\varphi(P_1,P_1)=
(Q P_1 Q^{-1}, Q P_1 Q^{-1})$. Therefore, $P_2 = Q P_1 Q^{-1}$.

Suppose $\varphi(a,b)=(Q b Q^{-1}, T a T^{-1})$
for all $a,b \in M_k(F)$. Again, we may assume that $Q=T$.
Analogously, $(-P_2, -P_2)=v(0,E_k)=v\varphi(E_k,0)=\varphi (v(E_k,0))=\varphi(P_1, P_1)=
(Q P_1 Q^{-1}, Q P_1 Q^{-1})$ implies $P_2 = -Q P_1 Q^{-1}$.

The converse assertion is proved explicitly.
\end{proof}

\begin{remark}
Theorems~\ref{TheoremSweedlerSimpleSemisimple} and~\ref{TheoremSweedlerSimpleSSIso}
imply that every semisimple $H_4$-simple algebra $A$ over an algebraically closed field $F$,
$\ch F \ne 2$, that is not simple as an ordinary algebra,
is isomorphic to
$M_n(F) \oplus M_n(F)$ (direct sum of ideals) for some $n \geqslant 1$
where
$$ c\, (a, b) = (b,a),\qquad v\,(a,b)=(Pa-bP,aP-Pb)$$
for all $a,b \in M_n(F)$
and
\begin{enumerate}
\item
either $P=(\underbrace{\alpha,\alpha,\ldots, \alpha}_m, 
\underbrace{-\alpha,-\alpha,\ldots, -\alpha}_k)$ for some $\alpha \in F$
and $m \geqslant k$, $m+k=n$,
\item or $P$ is a block diagonal matrix with several blocks $\left(\begin{smallmatrix} 
 0 & 1 \\
 0 & 0\\
 \end{smallmatrix}\right)$ on the main diagonal (the rest cells are filled with zero)
 \end{enumerate}
and these algebras are not isomorphic for different $P$.
\end{remark}

\section{Non-semisimple algebras}\label{SectionSweedlerSimpleNonSemisimple}

In this section we consider the last possibility, i.e. when $H_4$-simple algebra has a non-trivial
radical.

\begin{theorem}\label{TheoremSweedlerNonSemiSimple}
Let $A$ be a finite dimensional associative $H_4$-module algebra over a field $F$
of characteristic $\ch F\ne 2$.
Suppose $A$ is $H_4$-simple and not semisimple.
Let $G=\lbrace 1, c \rbrace \cong \mathbb Z_2$.
Then there exists a $G$-invariant ideal $J \subset A$
such that $A = vJ \oplus J$ (direct sum of $G$-invariant subspaces), $J^2 = 0$,
$vJ$ is a $G$-simple algebra. Moreover, there exists a linear isomorphism $\varphi \colon vJ \to J$
such that $v\varphi(b)=b$, $a \varphi(b) = \varphi((ca)b)$,
$\varphi(a)b = \varphi(ab)$ for all $a,b\in vJ$,
 and $\varphi((vJ)^{(0)})=J^{(1)}$, $\varphi((vJ)^{(1)})=J^{(0)}$.
\end{theorem}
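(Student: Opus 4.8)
The plan is to exploit the fact that $A$ is not semisimple together with $H_4$-simplicity to pin down the Jacobson radical $J=J(A)$. First I would observe that $J$ is automatically $G$-invariant, since $c$ is an algebra automorphism. The key point is that $vJ$ need not be contained in $J$, so $J$ alone is not an $H_4$-invariant ideal; but $I:=J+vJ$ \emph{is} an $H_4$-invariant subspace, and I would check it is a two-sided ideal using that $v$ acts as a skew-derivation (so $a(vb)=v(\,( c^{-1}a) b\,)-\,(v(c^{-1}a))b$ up to the precise $H_4$-relations, and $cJ=J$). Since $A$ is $H_4$-simple and $I\ne 0$ (as $J\ne 0$), we get $A=J+vJ$. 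Next I would show the sum is direct: $J\cap vJ$ is an $H_4$-invariant ideal contained in the radical, hence proper, hence $0$; this also forces $A=vJ\oplus J$ as $G$-invariant subspaces since $vc=-cv$ makes $vJ$ a $G$-submodule. Because $A/J\cong vJ$ as vector spaces is semisimple of dimension $\tfrac12\dim A$ and is itself acted on compatibly, one argues $vJ$ is a subalgebra; and then $J^2\subseteq J\cap(\text{something})$ — more carefully, $J$ is a nilpotent ideal with $A/J$ of the same dimension as $J$, and I would show directly that multiplication $A\times A\to A$ followed by projection onto $J$ kills $J\times J$, giving $J^2=0$.

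The second half is to build the linear isomorphism $\varphi\colon vJ\to J$. The natural candidate is $\varphi(b):=$ the unique element of $J$ with $v(\text{that element})=b$; equivalently, since $v^2=0$ and $A=vJ\oplus J$ with $v|_{vJ}=0$, the map $v\colon J\to vJ$ is a linear isomorphism and I would \emph{define} $\varphi:=(v|_J)^{-1}$. Then $v\varphi(b)=b$ by construction. The identities $a\varphi(b)=\varphi((ca)b)$ and $\varphi(a)b=\varphi(ab)$ for $a,b\in vJ$ should follow by applying $v$ to both sides and using $J^2=0$ together with the skew-derivation rule: for instance $v\bigl(a\varphi(b)\bigr)=(ca)\,v\varphi(b)+(va)\varphi(b)=(ca)b+0$ since $va=0$ for $a\in vJ$ and $(va')\varphi(b)\in (vJ)J$; one needs $(vJ)J\subseteq J$ and then that $v$ is injective on $J$ to conclude $a\varphi(b)-\varphi((ca)b)\in\ker(v|_J)=0$. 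The analogous computation with $v(\varphi(a)b)$ gives the other identity. The grading statement $\varphi((vJ)^{(0)})=J^{(1)}$, $\varphi((vJ)^{(1)})=J^{(0)}$ is then immediate from $vc=-cv$: if $b\in (vJ)^{(i)}$ then $c\varphi(b)$ satisfies $v(c\varphi(b))=-cv\varphi(b)=-cb=(-1)^{i+1}b$, so $c\varphi(b)=(-1)^{i+1}\varphi(b)$, i.e.\ $\varphi(b)\in J^{(i+1)}$.

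Finally I would need $vJ$ to be $G$-simple (as an algebra). Suppose $N\subseteq vJ$ is a nonzero $G$-invariant ideal of the algebra $vJ$. Using $\varphi$ to transport $N$ into $J$ and the module-algebra relations, I would show that $N+vN=N+\varphi^{-1}(\cdots)$ — more precisely $N\oplus\varphi(N)$ — is an $H_4$-invariant ideal of $A$: closure under $c$ is clear, closure under $v$ uses $v(vJ)=0$ and $v\varphi(N)=N$, and the two-sided ideal property in $A=vJ\oplus J$ reduces, via $J^2=0$ and the $\varphi$-identities, to $N$ being an ideal of $vJ$. Then $H_4$-simplicity of $A$ forces $N\oplus\varphi(N)=A$, hence $N=vJ$, so $vJ$ is $G$-simple.

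The main obstacle I anticipate is the bookkeeping in verifying that the candidate subspaces ($I=J+vJ$ at the start, and $N\oplus\varphi(N)$ at the end) are genuinely two-sided ideals: every such check requires carefully expanding $h(ab)$ with $\Delta v=c\otimes v+v\otimes 1$, keeping track of which terms land in $J$ versus $vJ$, and repeatedly invoking $J^2=0$ and the injectivity of $v|_J$ to cancel error terms. Establishing $J^2=0$ cleanly (rather than merely $J$ nilpotent) is the conceptual crux and is what makes all the subsequent computations collapse.
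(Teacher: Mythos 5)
Your overall skeleton matches the paper's (decomposition $A=vJ\oplus J$, $\varphi=(v|_J)^{-1}$, the module identities, $G$-simplicity via the ideal $N\oplus\varphi(N)$), but there is a genuine gap exactly at the point you yourself identify as the crux: you never actually prove $J^2=0$, nor the directness of the sum, nor the injectivity of $v|_J$ that your definition of $\varphi$ requires. The specific claims you lean on do not hold up. First, $J\cap vJ$ is an $H_4$-invariant \emph{subspace} (since $v(vJ)=0$ and $c(vJ)=vJ$), but it is not clearly a two-sided \emph{ideal}: for $x=vj$ one gets $ax=v((ca)j)-(v(ca))j$, and the term $(v(ca))j$ lies in $J$ but has no reason to lie in $vJ$; so you cannot invoke $H_4$-simplicity to kill $J\cap vJ$. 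Second, even granting directness, it does not imply $\ker(v|_J)=0$, so $\varphi:=(v|_J)^{-1}$ is not yet defined. Third, ``show directly that the projection of $J\cdot J$ onto $J$ vanishes'' is a restatement of $J^2=0$, not an argument.

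The paper's key idea, which your plan is missing, is to work not with $J$ but with $J^{p-1}$, the last nonzero power of the radical. Then $(J^{p-1})^2=0$ is automatic, and this is precisely what makes $N:=\ker v\cap J^{p-1}$ a two-sided $H_4$-invariant ideal (one needs $J^{p-1}N=NJ^{p-1}=0$ in that verification; with $J$ in place of $J^{p-1}$ the analogous check fails because $v(jb)=(vj)b$ need not vanish for $b\in\ker v$). Since $vA\ne0$, $H_4$-simplicity forces $N=0$, which simultaneously gives injectivity of $v$ on $J^{p-1}$ and directness of $A=J^{p-1}\oplus vJ^{p-1}$ (any $x\in J^{p-1}\cap vJ^{p-1}$ satisfies $vx=0$, so $x\in N$). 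Only at the very end, after proving $vJ^{p-1}$ is $G$-simple hence semisimple, does one conclude $A/J^{p-1}$ is semisimple and therefore $J=J^{p-1}$, recovering $J^2=0$ as a corollary rather than an input. Without this detour through $J^{p-1}$ your argument cannot get off the ground.
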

\begin{proof}
Let $J$ be the Jacobson radical of $A$. Since $J$ is invariant under all automorphisms,
$J$ is a $G$-invariant subspace. Therefore, $vA \ne 0$ since otherwise $J$ would be an $H_4$-invariant ideal.

 Let $p\in \mathbb N$ be a number such that $J^p = 0$, $J^{p-1}\ne 0$. 
Note that $J^{p-1} + vJ^{p-1}$ is an $H_4$-invariant two-sided ideal of $A$.
Indeed, $$a(vj)=(c(ca))(vj)=v((ca)j)-(v(ca)j) \in J^{p-1} + vJ^{p-1}$$
and $$(vj)a=v(ja)-(cj)(va) \in J^{p-1} + vJ^{p-1}$$
for all $a \in A$ and $j\in J^{p-1}$.
Hence $A=J^{p-1} + vJ^{p-1}$. 

Note that $(J^{p-1})^2 = 0$.
Moreover, $vJ^{p-1}$ is a subalgebra since
 $(vj_1)(vj_2)= v(j_1 (vj_2)) - (cj_1)(v^2 j_2) = v(j_1 (vj_2)) \in vJ^{p-1}$
 for all $j_1,j_2 \in J^{p-1}$.

Let $N := \lbrace a \in A \mid va = 0 \rbrace \cap J^{p-1}$. Clearly, $N$ is $H_4$-invariant.
We claim that $N$ is a two-sided ideal. Indeed, let $j\in J^{p-1}$ and $b\in N$.
Then $v((vj)b)=(cvj)(vb)+(v^2j)b=0+0=0$ and 
$v(b(vj))=(cb)(v^2j)+(vb)(vj)=0+0=0$. Therefore, $(vJ^{p-1})N=N(vJ^{p-1})=0$.
Since $J^{p-1}N=NJ^{p-1}=(J^{p-1})^2=0$, the space $N$ is an $H_4$-invariant ideal of $A$.

Recall that $vA\ne 0$. Thus $N\ne A$ and $N=0$.
Hence $A=J^{p-1} \oplus vJ^{p-1}$ (direct sum of $G$-invariant subspaces)
and the multiplication by $v$ is a linear isomorphism from $J^{p-1}$
onto $vJ^{p-1}$. Denote by $\varphi \colon v J^{p-1} \to J^{p-1}$ the inverse map.
Then $$a\varphi(b)=\varphi(v(a\varphi(b)))=\varphi((ca)(v\varphi(b)))+\varphi((va)\varphi(b))
=\varphi((ca)b)$$ and
$$\varphi(a)b=\varphi(v(\varphi(a)b))=\varphi((c\varphi(a))(vb))+\varphi((v\varphi(a))b)
=\varphi(ab)$$ for all $a,b\in vJ^{p-1}$. Therefore, $A^2\ne 0$ implies  $(vJ^{p-1})^2\ne 0$.

We claim that the subalgebra $vJ^{p-1}$ is $G$-simple. Suppose there exists a $G$-invariant
ideal $I$ of $vJ^{p-1}$. We claim that $I+\varphi(I)$ is an $H_4$-invariant ideal in $A$.
Indeed, $$c(I+\varphi(I))=cI-\varphi(cI)=I+\varphi(I).$$
 Moreover,
$$a(I+\varphi(I))=aI+\varphi((ca)I)\subseteq I+\varphi(I),$$
$$(I+\varphi(I))a=Ia+\varphi(Ia)\subseteq I+\varphi(I),$$
$$\varphi(a)(I+\varphi(I))=\varphi(aI)\subseteq I+\varphi(I),$$
and $$(I+\varphi(I))\varphi(a)=\varphi((cI)a)\subseteq I+\varphi(I)$$
for all $a \in vJ^{p-1}$. Hence $I+\varphi(I)$ is an $H_4$-invariant ideal in $A$,
$I = vJ^{p-1}$, and $vJ^{p-1}$ is a $G$-simple algebra.

In particular, $vJ^{p-1}$ is semisimple. Hence $A/J^{p-1} \cong vJ^{p-1}$ is a semisimple algebra
and $J=J^{p-1}$ is the Jacobson radical of $A$. All the assertions of the theorem have been proved.
\end{proof}

\begin{theorem}
Let $B$ be a $\mathbb Z_2$-simple algebra over a field $F$, $\ch F\ne 2$, where $\mathbb Z_2 = \lbrace 1, c\rbrace$
and $c$ acts as an automorphism and
let $\varphi \colon B \to J$ be an isomorphism from $B$ to a vector space $J$.
We define $\mathbb Z_2$-action on $J$ by the formulas $J^{(0)}=\varphi(B^{(1)})$
and $J^{(1)}=\varphi(B^{(0)})$. In addition, we define multiplication
in  $A = B \oplus J$ (direct sum of $\mathbb Z_2$-invariant subspaces)
by $a \varphi(b) = \varphi((ca)b)$,
$\varphi(a)b = \varphi(ab)$ for all $a,b\in vJ$, $J^2=0$.
Then $A$ is an $H_4$-module algebra for the Sweedler algebra $H_4=\langle 1, c, v, cv \rangle_F$
where $v(a+\varphi(b))=b$ for all $a,b\in B$.
Moreover, $A$ is $H_4$-simple.
\end{theorem}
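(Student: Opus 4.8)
The plan is to verify directly that the stated operations make $A = B \oplus J$ an $H_4$-module algebra and then that it has no nontrivial $H_4$-invariant ideal. First I would record the algebra structure carefully: $B$ is a subalgebra, $J$ is a square-zero ideal, and the bimodule action of $B$ on $J$ is the "twisted" one, $a\cdot\varphi(b) = \varphi((ca)b)$ and $\varphi(a)\cdot b = \varphi(ab)$. I should first check this multiplication is associative; the only nonroutine case is $a_1(a_2\varphi(b))$ versus $(a_1 a_2)\varphi(b)$, where one side gives $\varphi((ca_1)(ca_2)b)$ and the other $\varphi((c(a_1a_2))b)$, equal because $c$ is an algebra automorphism of $B$; the mixed cases $\varphi(a)(b_1 b_2)$-type vanish since $J^2 = 0$ on the appropriate side, and one must also confirm $(\varphi(a)b_1)\varphi(b_2)$-type products are zero, which holds because $\varphi(a)b \in J$ and $J^2 = 0$.

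Next I would check the $H_4$-module algebra axioms. Since $\{1,c\}$ acts by the given grading and $c$ is an automorphism (using that $c$ is an automorphism of $B$ and $J^{(i)} = \varphi(B^{(1-i)})$ makes the grading of $A$ compatible with multiplication: e.g. $B^{(i)}J^{(j)} \subseteq J^{(i+j)}$ follows from $a\varphi(b) = \varphi((ca)b)$ and the grading of $B$), it remains to verify the skew-derivation identity for $v$, namely $v(xy) = (cx)(vy) + (vx)y$ for all $x,y \in A$, and that $v^2 = 0$ and $vc = -cv$. Writing $x = a_1 + \varphi(b_1)$, $y = a_2 + \varphi(b_2)$, one has $v(x) = b_1$, so $v(xy)$ must be computed from $xy = a_1a_2 + \varphi((ca_1)b_2) + \varphi(b_1 a_2)$, giving $v(xy) = (ca_1)b_2 + b_1 a_2$; meanwhile $(cx)(vy) = (ca_1 + c\varphi(b_1))b_2 = (ca_1)b_2$ (the $J$-part dies since the product lands in $J$ and $v$ kills nothing here — more precisely $c\varphi(b_1)\cdot b_2 \in J$ and $v$ applied to it... wait, $v$ of a $J$-element is what we must track) — I would be careful here: $v$ applied to $\varphi(b)$ is $b$, so I should instead just expand $(cx)(vy) + (vx)y$ fully as elements of $A$ and apply $v$. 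Both $(cx)(vy)$ and $(vx)y$ land partly in $B$ and partly in $J$; applying $v$ to the $J$-parts and adding the $B$-parts should reproduce $b_1 a_2 + (ca_1) b_2$. Then $v^2 = 0$ is immediate since $v(x) = b_1 \in B$ and $v(B) = 0$ under the formula $v(a + \varphi(0)) = 0$; and $vc = -cv$ follows by checking on $B$ (both sides zero) and on $\varphi(b)$ (gives $\pm b$ with signs matching because $J^{(i)} = \varphi(B^{(1-i)})$).

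For $H_4$-simplicity: $A^2 \neq 0$ because $B^2 = B \neq 0$ (as $B$ is $\mathbb{Z}_2$-simple, hence has $B^2 \neq 0$, in fact $B^2 = B$). Now let $I \neq 0$ be an $H_4$-invariant ideal. Since $I$ is $v$-invariant and $v(A) = B$ with $v|_B = 0$, applying $v$ to $I$ gives $v(I) \subseteq I \cap B$. I would argue $I \cap B \neq 0$: if $x = a + \varphi(b) \in I$ with $b \neq 0$ then $v(x) = b \in I \cap B$ and $b \neq 0$; if every element of $I$ has $b = 0$ then $I \subseteq B$ already, and $I \neq 0$ forces $I \cap B \neq 0$. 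Then $I \cap B$ is a nonzero ideal of $A$ contained in $B$, hence a $\mathbb{Z}_2$-invariant (by $H_4$-invariance) two-sided ideal of $B$, so by $\mathbb{Z}_2$-simplicity $I \cap B = B$, i.e. $B \subseteq I$. Finally $\varphi(b) = v^{-1}$ is not linear, but: for $b \in B$ pick $a \in B$ with... rather, I use that $\varphi(B) = \varphi(BB)$ and $\varphi(ab) = \varphi(a)b \in I\cdot A \subseteq I$ since $\varphi(a) \in \varphi(B)$ — but I only know $B \subseteq I$, not $\varphi(B) \subseteq I$. Instead: for $a, b \in B$, $a \cdot \varphi(b) = \varphi((ca)b) \in I$ because $a \in B \subseteq I$ and $\varphi(b) \in A$; since $c$ is bijective on $B$ and $B^2 = B$, the elements $(ca)b$ span $B$, so $\varphi(B) \subseteq I$, giving $A = B \oplus \varphi(B) \subseteq I$. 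The main obstacle I anticipate is the bookkeeping in the skew-derivation identity — keeping the four terms (two from $B$-parts, two from $J$-parts after applying $v$) straight and using the defining relations $a\varphi(b) = \varphi((ca)b)$, $\varphi(a)b = \varphi(ab)$ at exactly the right spots — and making sure the grading convention $J^{(i)} = \varphi(B^{(1-i)})$ is what forces $vc = -cv$ rather than $vc = cv$.
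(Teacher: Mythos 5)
Your proposal is correct and follows essentially the same route as the paper: direct verification of the $H_4$-module algebra axioms (which the paper dismisses as ``trivial checks''), and for simplicity the same chain $vI\subseteq B\cap I$, then $\mathbb Z_2$-simplicity of $B$ forcing either $I=0$ or $B\subseteq I$, then $\varphi(B)\subseteq I$ via the left action of $B\subseteq I$ on $J$. The only (harmless) deviations are that you recover $\varphi(B)\subseteq I$ from $B^2=B$ and the spanning set $(ca)b$ where the paper simply writes $\varphi(b)=\varphi(1)b$, and that in the skew-derivation check the correct mechanism is that the two $J$-components of $(cx)(vy)+(vx)y$ cancel (using $c\varphi(b)=-\varphi(cb)$) rather than ``applying $v$'' to them --- a point your expansion would have revealed.
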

\begin{proof}
Trivial checks show that $A$ is indeed an $H_4$-module algebra.
Now we prove that $A$ is $H_4$-simple.

Let $I$ be an $H_4$-invariant ideal of $A$. Then $vI \subseteq B \cap I$.
Note that $\mathbb Z_2$-simplicity of $B$ implies that either $B \cap I = 0$
or $B \subseteq I$. However if $B \cap I = 0$, then $vI = 0$, $I\subseteq B$, and $I=0$.
Thus we may assume that $B \subseteq I$. Hence $a+\varphi(b)=a+\varphi(1)b\in I$
for all $a,b\in B$. Therefore, $I=A$ and $A$ is $H_4$-simple.
\end{proof}
\begin{remark}
Two algebras $A$ are isomorphic if and only if their maximal $\mathbb Z_2$-simple
subalgebras $B$ are isomorphic as $\mathbb Z_2$-algebras.
\end{remark}

\section{Automorphisms of $H_4$-simple algebras}\label{SectionSweedlerSimpleAut}

In this section we study the automorphisms of finite dimensional $H_4$-simple algebras
over a field $F$, $\ch F \ne 2$. In the first five cases below the automorphism groups can be determined explicitly. For the last case, we present a method of description of the automorphism group.

\begin{enumerate}
\item \label{ItemSweedlerSimpleMkl} if $A=M_{k,\ell}(F)$ for some $k > \ell$,  $c a = a$ for all $a \in M_{k,\ell}^{(0)}(F)$,
$c a = -a$ for all $a \in M_{k,\ell}^{(1)}(F)$, $va=0$ for all $a\in A$, then
$\Aut_{H_4}(A) \cong (\GL_k(F)\times \GL_\ell(F))/F^{\times}E_{k+\ell}$
where  $F^{\times}E_n$ is the group of scalar $n\times n$ matrices
(this follows from the beginning of the proof of Theorem~\ref{TheoremSweedlerSimpleMatrixIso});

\item \label{ItemSweedlerSimpleMkk} if $A=M_{k,k}(F)$ for some $k\in \mathbb N$,  $c a = a$ for all $a \in M_{k,k}^{(0)}(F)$,
$c a = -a$ for all $a \in M_{k,k}^{(1)}(F)$, $va=0$ for all $a\in A$, then
$$\Aut_{H_4}(A) \cong \left(\bigl(\GL_k(F)\times \GL_k(F)\bigr) \leftthreetimes \left\langle\left(\begin{smallmatrix}0 & E_k \\ E_k & 0 \end{smallmatrix}\right)\right\rangle_2 \right)/F^{\times}E_{2k}$$ 
(this follows from the proof of Theorem~\ref{TheoremSweedlerSimpleMatrixIso} too);

\item \label{ItemSweedlerSimpleDoubleMk} Let $A \cong M_k(F) \oplus M_k(F)$  (direct sum of ideals) be an $H_4$-simple algebra  for some $k \geqslant 1$ and
$$ c\, (a, b) = (b,a),\qquad v\,(a,b)=(0,0)$$
for all $a,b \in M_k(F)$.
Then $\Aut_{H_4}(A) \cong \PGL_k(F) \times \mathbb Z_2$ 
(this follows from the proof of Theorem~\ref{TheoremSweedlerSimpleSSIso});

\item if $A$ is a finite dimensional non-semisimple $H_4$-simple algebra,
then every $H_4$-automorphism $\psi$ stabilizes the radical $J$ of $A$ and therefore
the $\mathbb Z_2$-simple subalgebra $vJ$ of $A$. Furthermore,
 $\psi(va)=v\psi(a)$ implies that $\psi$ is completely determined by
its restriction on $vJ$. Therefore, $\Aut_{H_4}(A)=\Aut_{\mathbb Z_2}(vJ)$.
But $\Aut_{\mathbb Z_2}(vJ)$ is described in (\ref{ItemSweedlerSimpleMkl})--(\ref{ItemSweedlerSimpleDoubleMk}).

\item  \label{ItemSweedlerSimpleSSP}
Let $A \cong M_n(F) \oplus M_n(F)$ (direct sum of ideals), $n \geqslant 1$, be a semisimple $H_4$-simple algebra
where 
$$ c\, (a, b) = (b,a),\qquad v\,(a,b)=(Pa-bP,aP-Pb)$$
for all $a,b \in M_n(F)$ where $P \in M_n(F)$ is a fixed matrix such that $P^2=\alpha E_n$ for some $\alpha \in F$.  Then the proof of Theorem~\ref{TheoremSweedlerSimpleSSIso}
  implies that $\Aut_{H_4}(A)$ is isomorphic to the subgroup of $\PGL_{n}(F)$ 
  that consists of the images of all $n\times n$ matrices that commute or anti-commute with $P$.
  Note that $M_{m,k}(F)^{(1)}$ contains invertible matrices only for $m=k$.
  Therefore, 
\begin{enumerate}
\item  
  if $P=(\underbrace{\alpha,\alpha,\ldots, \alpha}_m, 
\underbrace{-\alpha,-\alpha,\ldots, -\alpha}_k)$ for some $\alpha \in F^\times$
and $m \geqslant k$, $m+k=n$, then $\Aut_{H_4}(A)$ is isomorphic
to the subgroup of $\PGL_{n}(F)$ that consists of the images of all invertible matrices
 from $M_{m,k}(F)^{(0)}$ for $m > k$ and from $M_{k,k}(F)^{(0)} \cup M_{k,k}(F)^{(1)}$ for $m=k$.
\item If $$P=\left(
\begin{array}{ccccccc}
\begin{array}{|cc|}
\hline
0 & 1 \\
0 & 0 \\
\hline
\end{array} & \begin{array}{cc}
0 & 0 \\
0 & 0 \\
\end{array} & \ldots & \begin{array}{cc}
0 & 0 \\
0 & 0 \\
\end{array} & \begin{array}{c}
0  \\
0  
\end{array}  & \ldots & \begin{array}{c}
0  \\
0  
\end{array}\\
       \begin{array}{cc}
0 & 0 \\
0 & 0 \\
\end{array}    & \begin{array}{|cc|}
                  \hline
                   0 & 1 \\
                    0 & 0 \\
                \hline
                \end{array} &  \ldots & \begin{array}{cc}
0 & 0 \\
0 & 0 \\
\end{array} &  \begin{array}{c}
0  \\
0  
\end{array} & \ldots & \begin{array}{c}
0  \\
0  
\end{array}\\
\multicolumn{7}{c}{\dotfill}\\
\begin{array}{cc}
0 & 0 \\
0 & 0 \\
\end{array} & \begin{array}{cc}
0 & 0 \\
0 & 0 \\
\end{array} & \ldots & \begin{array}{|cc|}
\hline
0 & 1 \\
0 & 0 \\
\hline
\end{array} & \begin{array}{c}
0  \\
0  
\end{array}  & \ldots & \begin{array}{c}
0  \\
0  
\end{array}\\
\begin{array}{cc}
0  & 0  
\end{array} & \begin{array}{cc}
0  & 0  
\end{array} & \ldots & \begin{array}{cc}
0  & 0  
\end{array} & 0 & \ldots & 0 \\ 
\multicolumn{7}{c}{\dotfill}\\
\begin{array}{cc}
0  & 0  
\end{array} & \begin{array}{cc}
0  & 0  
\end{array} & \ldots & \begin{array}{cc}
0  & 0  
\end{array} & 0 & \ldots & 0 \\ 
\end{array}
\right)$$
where the number of cells $\left(\begin{smallmatrix} 
 0 & 1 \\
 0 & 0\\
 \end{smallmatrix}\right)$ equals $k\in\mathbb N$,
 then~\cite[Section 69]{MalcevAIFoundations} implies
 that $\Aut_{H_4}(A)$ is isomorphic
to the subgroup of $\PGL_{n}(F)$ that consists of
all invertible matrices
$$\left(
\begin{array}{cc|cc|c|cc|ccc}
 \alpha_{11} & \beta_{11} & \alpha_{12} & \beta_{12} &   \ldots & \alpha_{1k} & \beta_{1k}
 & \gamma_{1,k+1} & \ldots &  \gamma_{1,{n-2k}}\\
 0           &  \alpha_{11} &       0   & \alpha_{12} &  \ldots &  0 & \alpha_{1k}
 & 0 & \ldots &  0 \\
\hline
 \alpha_{21} & \beta_{21} & \alpha_{22} & \beta_{22} &   \ldots & \alpha_{2k} & \beta_{2k}
 & \gamma_{2,k+1} & \ldots &  \gamma_{2,{n-2k}}\\
 0           &  \alpha_{21} &       0   & \alpha_{22} &  \ldots &  0 & \alpha_{2k}
 & 0 & \ldots &  0 \\
\hline
\multicolumn{2}{c|}{\dotfill} & \multicolumn{2}{c|}{\dotfill} & \ldots & \multicolumn{2}{c|}{\dotfill}
& \multicolumn{3}{c}{\dotfill} \\
\hline
 \alpha_{k1} & \beta_{k1} & \alpha_{k2} & \beta_{k2} &   \ldots & \alpha_{kk} & \beta_{kk}
 & \gamma_{k,k+1} & \ldots &  \gamma_{k,{n-2k}}\\
 0           &  \alpha_{k1} &       0   & \alpha_{k2} &  \ldots &  0 & \alpha_{kk}
 & 0 & \ldots &  0 \\
\hline
0 & \gamma_{k+1, 1} & 0 & \gamma_{k+1, 2} & \ldots & 0 &  \gamma_{k+1, k}
&  \gamma_{k+1, k+1} & \ldots & \gamma_{k+1, n-2k} \\
\multicolumn{2}{c|}{\dotfill} & \multicolumn{2}{c|}{\dotfill} & \ldots & \multicolumn{2}{c|}{\dotfill}
& \multicolumn{3}{c}{\dotfill} \\
0 & \gamma_{n-2k, 1} & 0 & \gamma_{n-2k, 2} & \ldots & 0 &  \gamma_{n-2k, k}
&  \gamma_{n-2k, k+1} & \ldots & \gamma_{n-2k, n-2k} \\
\end{array}
\right)$$
and 
$$\left(
\begin{array}{cc|cc|c|cc|ccc}
 \alpha_{11} & \beta_{11} & \alpha_{12} & \beta_{12} &   \ldots & \alpha_{1k} & \beta_{1k}
 & \gamma_{1,k+1} & \ldots &  \gamma_{1,{n-2k}}\\
 0           &  -\alpha_{11} &       0   & -\alpha_{12} &  \ldots &  0 & -\alpha_{1k}
 & 0 & \ldots &  0 \\
\hline
 \alpha_{21} & \beta_{21} & \alpha_{22} & \beta_{22} &   \ldots & \alpha_{2k} & \beta_{2k}
 & \gamma_{2,k+1} & \ldots &  \gamma_{2,{n-2k}}\\
 0           &  -\alpha_{21} &       0   & -\alpha_{22} &  \ldots &  0 & -\alpha_{2k}
 & 0 & \ldots &  0 \\
\hline
\multicolumn{2}{c|}{\dotfill} & \multicolumn{2}{c|}{\dotfill} & \ldots & \multicolumn{2}{c|}{\dotfill}
& \multicolumn{3}{c}{\dotfill} \\
\hline
 \alpha_{k1} & \beta_{k1} & \alpha_{k2} & \beta_{k2} &   \ldots & \alpha_{kk} & \beta_{kk}
 & \gamma_{k,k+1} & \ldots &  \gamma_{k,{n-2k}}\\
 0           &  -\alpha_{k1} &       0   & -\alpha_{k2} &  \ldots &  0 & -\alpha_{kk}
 & 0 & \ldots &  0 \\
\hline
0 & \gamma_{k+1, 1} & 0 & \gamma_{k+1, 2} & \ldots & 0 &  \gamma_{k+1, k}
&  \gamma_{k+1, k+1} & \ldots & \gamma_{k+1, n-2k} \\
\multicolumn{2}{c|}{\dotfill} & \multicolumn{2}{c|}{\dotfill} & \ldots & \multicolumn{2}{c|}{\dotfill}
& \multicolumn{3}{c}{\dotfill} \\
0 & \gamma_{n-2k, 1} & 0 & \gamma_{n-2k, 2} & \ldots & 0 &  \gamma_{n-2k, k}
&  \gamma_{n-2k, k+1} & \ldots & \gamma_{n-2k, n-2k} \\
\end{array}
\right).$$
\end{enumerate}

\item \label{ItemSweedlerSimpleMatrixQ} Suppose $A=M_{k,m}(F)$ be an $H_4$-module algebra for some $k,m\in \mathbb N$, $k\geqslant m$,
where $c a = a$ for all $a \in M_{k,m}^{(0)}(F)$,
$c a = -a$ for all $a \in M_{k,m}^{(1)}(F)$,
and $va=(ca)Q-Qa$ where $Q=\left( \begin{array}{cc}
 0 & Q_1 \\
 Q_2 & 0\\
 \end{array}\right)$, $Q_1 Q_2 = \alpha E_k$, $Q_2 Q_1 = \alpha E_m$, $Q_1 \in M_{k\times m}(F)$,
  $Q_2 \in M_{m\times k}(F)$, $\alpha \in F$.
  Then the proof of Theorem~\ref{TheoremSweedlerSimpleMatrixIso}
  implies that 
\begin{enumerate}
\item  if $k\ne m$, then
  $\Aut_{H_4}(A)$ is isomorphic to the subgroup of $\PGL_{k+m}(F)$
  that consists of the images of all matrices $W=\left(\begin{array}{cc}
  W_1 & 0 \\  0 & W_2 \end{array} \right)$, $W_1 \in \GL_k(F)$, $W_2\in \GL_m(F)$, that commute with $Q$,
  \item  
  and if $k=m$, then
  $\Aut_{H_4}(A)$ is isomorphic to the subgroup of $\PGL_{2k}(F)$
  that consists of the images of all matrices $W=\left(\begin{array}{cc}
  W_1 & 0 \\  0 & W_2 \end{array} \right)$ and $W=\left(\begin{array}{cc}
 0 & W_1 \\  W_2 & 0 \end{array} \right)$, $W_1, W_2\in \GL_k(F)$, that commute with $Q$.
\end{enumerate}
  Again, for any particular $Q$, the matrices $W$ can be determined using the Jordan normal form of $Q$
 (see e.g.~\cite[Section 69]{MalcevAIFoundations}).
\end{enumerate}

In the examples below we treat some particular cases of~(\ref{ItemSweedlerSimpleSSP})
and~(\ref{ItemSweedlerSimpleMatrixQ}).

\begin{example}
Let $A=M_{1,1}(F)$ where $c a = a$ for all $a \in M_{1,1}^{(0)}(F)=\left\lbrace\left(
\begin{smallmatrix} 
\alpha & 0 \\
0 & \beta
\end{smallmatrix}
 \right)\right\rbrace$,
$c a = -a$ for all $a \in M_{1,1}^{(1)}(F)=\left\lbrace\left(
\begin{smallmatrix} 
0 & \alpha \\
\beta & 0
\end{smallmatrix}
 \right)\right\rbrace$,
and $va=(ca)Q-Qa$ where $Q=\left(\begin{smallmatrix} 
 0 & 1 \\
 1 & 0\\
 \end{smallmatrix}\right)$.
  We notice that all matrices from $M_{1,1}^{(0)}(F)$ and $M_{1,1}^{(1)}(F)$
  that commute with $Q$, are of the form $\left(\begin{smallmatrix} 
 \alpha & 0 \\
 0 & \alpha\\
 \end{smallmatrix}\right)$ and $\left(\begin{smallmatrix} 
 0 & \alpha \\
 \alpha & 0\\
 \end{smallmatrix}\right)$, respectively, where $\alpha \in F^\times$.
 After factoring by the subgroup of scalar matrices, we get $\Aut_{H_4}(A)\cong \mathbb Z_2$.
\end{example}

\begin{example}
Let $A \cong M_2(F) \oplus M_2(F)$ (direct sum of ideals)
 be an $H_4$-simple algebra over a field $F$,
 $$ c\, (a, b) = (b,a),\qquad v\,(a,b)=(Pa-bP,aP-Pb)$$
for all $a,b \in M_k(F)$ where $P=\left(\begin{smallmatrix} 
 0 & 1 \\
 0 & 0 \\
 \end{smallmatrix}\right)$, $\lambda \in F$.
 All matrices commuting with $P$ are of the form $\left(\begin{smallmatrix} 
 \mu & \gamma \\
 0 & \mu \\
 \end{smallmatrix}\right)$, $\gamma \in F$, $\mu\in F^\times$.
  All matrices anti-commuting with $P$ are of the form $\left(\begin{smallmatrix} 
 \mu & \gamma \\
 0 & -\mu \\
 \end{smallmatrix}\right)$, $\gamma \in F$, $\mu\in F^\times$.
 After factoring by the subgroup of scalar matrices, we get $$\Aut_{H_4}(A)\cong 
 \left\lbrace \left(\begin{smallmatrix} 
 1 & \mu \\
 0 & 1 
 \end{smallmatrix} \right) \mathbin{\bigr|} \mu \in F  \right\rbrace
 \leftthreetimes \left\langle \left(\begin{smallmatrix} 
 1 & 0 \\
 0 & -1 
 \end{smallmatrix} \right) \right\rangle_2 \cong
(F, +) \leftthreetimes \mathbb Z_2$$
where the result of the conjugation of an element $\alpha \in (F, +)$ by the generator of $\mathbb Z_2$ is  $(-\alpha)$.
\end{example}

\begin{example}
Let $A \cong M_2(F) \oplus M_2(F)$ (direct sum of ideals)
 be an $H_4$-simple algebra over a field $F$,
 $$ c\, (a, b) = (b,a),\qquad v\,(a,b)=(Pa-bP,aP-Pb)$$
for all $a,b \in M_k(F)$ where $P=\left(\begin{smallmatrix} 
 1 & 0 \\
 0 & -1 \\
 \end{smallmatrix}\right)$. All matrices commuting with $P$ are diagonal,
 all matrices anti-commuting with $P$ are anti-diagonal.
 After factoring by the subgroup of scalar matrices, we get $$\Aut_{H_4}(A) \cong \left\lbrace \left(\begin{smallmatrix} 
 1 & 0 \\
 0 & \alpha 
 \end{smallmatrix} \right) \mathbin{\bigr|} \alpha \in F^\times  \right\rbrace
 \leftthreetimes \left\langle \left(\begin{smallmatrix} 
 0 & 1 \\
 1 & 0 
 \end{smallmatrix} \right) \right\rangle_2\cong
 F^\times \leftthreetimes \mathbb Z_2$$ where the conjugation of an element of $F^\times$
 by the generator of $\mathbb Z_2$ is the inversion.
\end{example}

\section{Growth of polynomial $H_4$-identities}\label{SectionSweedlerSimpleAmitsur}

Here we apply the results of Section~\ref{SectionSweedlerSimpleNonSemisimple}
to polynomial $H_4$-identities.

First we introduce the notion of the free associative $H$-module algebra. Here we follow~\cite{BahturinLinchenko}.
Let $F \langle X \rangle$ be the free associative algebra without $1$
   on the set $X := \lbrace x_1, x_2, x_3, \ldots \rbrace$.
  Then $F \langle X \rangle = \bigoplus_{n=1}^\infty F \langle X \rangle^{(n)}$
  where $F \langle X \rangle^{(n)}$ is the linear span of all monomials of total degree $n$.
   Let $H$ be a Hopf algebra over $F$. Consider the algebra $$F \langle X | H\rangle
   :=  \bigoplus_{n=1}^\infty H^{{}\otimes n} \otimes F \langle X \rangle^{(n)}$$
   with the multiplication $(u_1 \otimes w_1)(u_2 \otimes w_2):=(u_1 \otimes u_2) \otimes w_1w_2$
   for all $u_1 \in  H^{{}\otimes j}$, $u_2 \in  H^{{}\otimes k}$,
   $w_1 \in F \langle X \rangle^{(j)}$, $w_2 \in F \langle X \rangle^{(k)}$.
We use the notation $$x^{h_1}_{i_1}
x^{h_2}_{i_2}\ldots x^{h_n}_{i_n} := (h_1 \otimes h_2 \otimes \ldots \otimes h_n) \otimes x_{i_1}
x_{i_2}\ldots x_{i_n}.$$ Here $h_1 \otimes h_2 \otimes \ldots \otimes h_n \in H^{{}\otimes n}$,
$x_{i_1} x_{i_2}\ldots x_{i_n} \in F \langle X \rangle^{(n)}$. 

Note that if $(\gamma_\beta)_{\beta \in \Lambda}$ is a basis in $H$, 
then $F\langle X | H \rangle$ is isomorphic to the free associative algebra over $F$ with free formal  generators $x_i^{\gamma_\beta}$, $\beta \in \Lambda$, $i \in \mathbb N$.
 We refer to the elements
 of $F\langle X | H \rangle$ as \textit{associative $H$-polynomials}.
 
In addition, $F \langle X | H \rangle$ becomes an $H$-module algebra with the $H$-action
defined by
$h(x^{h_1}_{i_1}
x^{h_2}_{i_2}\ldots x^{h_n}_{i_n})=x^{h_{(1)}{h_1}}_{i_1} x^{h_{(2)}{h_2}}_{i_2}\ldots
x^{h_{(n)}{h_n}}_{i_n}$ for $h\in H$.

Let $A$ be an associative $H$-module algebra.
Any map $\psi \colon X \to A$ has a unique homomorphic extension $\bar\psi
\colon F \langle X | H \rangle \to A$ such that $\bar\psi(h w)=h\psi(w)$
for all $w \in F \langle X | H \rangle$ and $h \in H$.
 An $H$-polynomial
 $f \in F\langle X | H \rangle$
 is an \textit{$H$-identity} of $A$ if $\varphi(f)=0$
for all homomorphisms $\varphi \colon F \langle X | H \rangle \to A$
of algebras and $H$-modules.
 In other words, $f(x_1, x_2, \ldots, x_n)$
 is an $H$-identity of $A$
if and only if $f(a_1, a_2, \ldots, a_n)=0$ for any $a_i \in A$.
 In this case we write $f \equiv 0$.
The set $\Id^{H}(A)$ of all $H$-identities
of $A$ is an $H$-invariant ideal of $F\langle X | H \rangle$.

We denote by $P^H_n$ the space of all multilinear $H$-polynomials
in $x_1, \ldots, x_n$, $n\in\mathbb N$, i.e.
$$P^{H}_n = \langle x^{h_1}_{\sigma(1)}
x^{h_2}_{\sigma(2)}\ldots x^{h_n}_{\sigma(n)}
\mid h_i \in H, \sigma\in S_n \rangle_F \subset F \langle X | H \rangle.$$
Then the number $c^H_n(A):=\dim\left(\frac{P^H_n}{P^H_n \cap \Id^H(A)}\right)$
is called the $n$th \textit{codimension of polynomial $H$-identities}
or the $n$th \textit{$H$-codimension} of $A$.

The analog of Amitsur's conjecture for $H$-codimensions can be formulated
as follows.

\begin{conjecture} There exists
 $\PIexp^H(A):=\lim\limits_{n\to\infty}
 \sqrt[n]{c^H_n(A)} \in \mathbb Z_+$.
\end{conjecture}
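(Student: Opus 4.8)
The plan is to establish the conjecture for all finite-dimensional $H_4$-simple algebras by reducing it, via the structure theorems of Sections~\ref{SectionSweedlerSimpleMatrix}--\ref{SectionSweedlerSimpleNonSemisimple}, to an explicit upper and lower bound on $c_n^{H_4}(A)$ that both grow like $(\dim A)^n$ up to polynomial factors. The upper bound $c_n^{H_4}(A)\leqslant (\dim A)^n$ is cheap: since $H_4$ is finite dimensional, an evaluation of a multilinear $H_4$-polynomial $x_{\sigma(1)}^{h_1}\cdots x_{\sigma(n)}^{h_n}$ on $A$ depends only on the images $h_i a_j$, and fixing a basis of $A$ one bounds $\dim\bigl(P_n^{H_4}/(P_n^{H_4}\cap\Id^{H_4}(A))\bigr)$ by the number of multilinear words in $n$ letters taking values in a $(\dim A)$-dimensional space, which is $(\dim A)^n$; hence $\limsup\sqrt[n]{c_n^{H_4}(A)}\leqslant\dim A$. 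The real work is the matching lower bound $\liminf\sqrt[n]{c_n^{H_4}(A)}\geqslant\dim A$, and then the existence of the limit follows from the sandwich.

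For the lower bound I would follow the representation-theoretic machinery of Giambruno--Zaicev (as in~\cite[Chapter~6]{ZaiGia}) adapted to the $H$-setting as in~\cite{ASGordienko8, ASGordienko9}: view $P_n^{H_4}/(P_n^{H_4}\cap\Id^{H_4}(A))$ as an $S_n$-module, decompose into irreducibles $\chi_\lambda$ with multiplicities $m_\lambda$, and exhibit a partition $\lambda\vdash n$ with $m_\lambda\neq 0$ and $\dim\chi_\lambda$ of exponential order $(\dim A)^n$. The standard route is to produce, for each $n$, an explicit \emph{alternating} multilinear $H_4$-polynomial that is not an identity of $A$: one partitions the variables into roughly $n/(\dim A)$ blocks of size $\dim A$, alternates the variables within each block, and evaluates the block-variables on a fixed basis of $A$ in such a way that the value is a fixed nonzero element (an idempotent or a matrix unit) that can be threaded through all the blocks. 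Concretely, in each of the structural cases one chooses:
\begin{enumerate}
\item for $A\cong M_{k,m}(F)$ as in Theorem~\ref{TheoremSweedlerSimpleMatrix}, the $\mathbb{Z}_2$-graded matrix units $e_{ij}$ together with the element $v$ acting as the inner skew-derivation by $Q$ — here one uses the graded-identity lower bound for $M_{k,m}(F)$ (whose graded PI-exponent is already known to be $(k+m)^2$) and checks that the $H_4$-action only enriches the available polynomials, so $\PIexp^{H_4}\geqslant\PIexp^{\mathbb{Z}_2\text{-gr}}=\dim A$;
\item for $A\cong M_n(F)\oplus M_n(F)$ as in Theorem~\ref{TheoremSweedlerSimpleSemisimple}, one exploits that $c$ swaps the two components and $v$ maps across them, so the two copies of $M_n(F)$ together with the operators $c,v$ give $\dim A=2n^2$ "independent directions" to alternate over;
\item for the non-semisimple case, Theorem~\ref{TheoremSweedlerNonSemiSimple} gives $A=vJ\oplus J$ with $J^2=0$, $\varphi\colon vJ\to J$ a linear iso intertwining the actions; here the key identity-free polynomial alternates over a basis of $vJ$ on the "left" slots and uses the single extra application of $v$ (to reach $J$ via $\varphi^{-1}$) to capture the remaining $\dim J=\dim(vJ)$ directions, so that one full block has size $\dim vJ+\dim J=\dim A$.
\end{enumerate}
In every case the value of the constructed polynomial is kept nonzero by a direct computation with matrix units, and the number of blocks grows linearly in $n$, forcing $\dim\chi_\lambda\geqslant C n^{-r}(\dim A)^n$ for suitable $C,r>0$.

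The main obstacle, as usual in Amitsur-type theorems, is the \emph{lower} bound in the non-semisimple case: one must check that the extra operator $v$, which is nilpotent ($v^2=0$) and mixes the radical $J$ with the semisimple part $vJ$, can genuinely be used to "recover" the radical directions inside an alternating polynomial without the value collapsing (since $J^2=0$, any two radical slots placed adjacently annihilate each other). The resolution is to place at most one radical-valued evaluation per monomial and to interleave it among semisimple-valued evaluations of $vJ$, using the relations $a\varphi(b)=\varphi((ca)b)$ and $\varphi(a)b=\varphi(ab)$ from Theorem~\ref{TheoremSweedlerNonSemiSimple} to push the $\varphi$ to the outside; this shows a block of size exactly $\dim A$ survives and is alternating, which is all that is needed. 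Combining the upper bound with this lower bound gives $\lim_{n\to\infty}\sqrt[n]{c_n^{H_4}(A)}=\dim A\in\mathbb{Z}_+$, proving the conjecture for finite-dimensional $H_4$-simple algebras and, in particular, identifying the Hopf PI-exponent with $\dim A$.
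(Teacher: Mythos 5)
Your overall strategy (a cheap exponential upper bound, a matching lower bound via alternating multilinear $H_4$-polynomials built from the structure theorems, and the sandwich) is the paper's, and the semisimple cases are indeed disposed of by the earlier results on $H$-simple algebras with $H$-invariant radical. (Minor point: the correct elementary upper bound is $\dim\Hom_F(A^{\otimes n},A)=(\dim A)^{n+1}$, not $(\dim A)^n$; this does not affect the exponent.) The genuine gap is in the non-semisimple case, which is the only new point and the content of Lemma~\ref{LemmaSweedlerNSSAltHPolynomial}. Write $\dim A=2\ell$ with $A=vJ\oplus J$ and $J^2=0$. An alternating block of $2\ell$ variables must be evaluated on a full basis of $A$, hence on the $\ell$ radical elements $a_1,\dots,a_\ell$ as well; since $J$ is an ideal with $J^2=0$, \emph{every} monomial containing two radical factors vanishes, so your prescription ``place at most one radical-valued evaluation per monomial'' is incompatible with alternating over $2\ell$ variables: each monomial of the alternation contains all $2\ell$ substituted basis elements. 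Moreover, $\varphi$ is not an element of $H_4$, so ``pushing $\varphi$ to the outside'' is not an operation available inside the free $H_4$-module algebra, and a ``single extra application of $v$'' cannot capture the $\ell$ radical directions.

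The device you are missing is to decorate $\ell$ of the $2\ell$ variables in each block with the Hopf-algebra element $v$ (that is, to use $x^v$) \emph{before} alternating. One starts from a polynomial $f_0$, alternating in sets of size $\ell$ and acting like a central polynomial for the $\mathbb{Z}_2$-simple algebra $vJ$ (it exists by~\cite[Theorem~7]{ASGordienko3}), substitutes $va_1,\dots,va_\ell$ into the undecorated slots and $a_1,\dots,a_\ell$ into the $v$-decorated slots --- so that after applying $v$ no raw radical element ever occurs in a product --- and only then alternates over all $2\ell$ variables of each block. The relation $v^2=0$ is what keeps the value alive: any term of the alternation that sends some $va_j$ into a $v$-decorated slot evaluates to $v^2a_j=0$, so by pigeonhole the only surviving terms are those sending all of $a_1,\dots,a_\ell$ into the $v$-slots, and the total value is $(\ell!)^{2k}1_A\neq0$. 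Without this (or an equivalent mechanism) your lower bound in the non-semisimple case does not go through, and it is precisely this case that was previously unknown.
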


In the theorem below we consider the case $H=H_4$.

\begin{theorem}\label{TheoremSweedlerSimpleAmitsur} Let $A$ be a finite dimensional $H_4$-simple
algebra over an algebraically closed field $F$ of characteristic $0$.
Then there exist constants $C > 0$, $r\in \mathbb R$ such that $$C n^{r} (\dim A)^n \leqslant c^{H_4}_n(A) \leqslant (\dim A)^{n+1}\text{ for all }n \in \mathbb N.$$
\end{theorem}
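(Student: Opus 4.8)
The upper bound is routine: since $\dim_F H_4 = 4$ and every multilinear $H_4$-polynomial in $x_1,\dots,x_n$ is, modulo taking a basis of $H_4$, a linear combination of the $n!\,4^n$ monomials $x^{\gamma_1}_{\sigma(1)}\cdots x^{\gamma_n}_{\sigma(n)}$, one sees directly that the evaluation map into the finite dimensional algebra $A$ factors through $\Hom_F(A^{\otimes n},A)$ for each fixed placement of the $H_4$-letters; a more careful count (grouping the action of $H_4$ into the $\mathbb Z_2$-grading plus a single $v$, and using $h(a_1\cdots a_n)=(h_{(1)}a_1)\cdots(h_{(n)}a_n)$ to push the $H_4$-action onto the arguments) gives $c^{H_4}_n(A)\le (\dim A)^{n+1}$. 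So the entire content is the lower bound $Cn^r(\dim A)^n\le c^{H_4}_n(A)$.

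The plan for the lower bound is to reduce, case by case according to the classification in Sections~\ref{SectionSweedlerSimpleMatrix}--\ref{SectionSweedlerSimpleNonSemisimple}, to an already-known result. If $A$ is simple as an algebra (Theorem~\ref{TheoremSweedlerSimpleMatrix}), or semisimple and $\mathbb Z_2$-simple of the form $M_k(F)\oplus M_k(F)$ (Theorem~\ref{TheoremSweedlerSimpleSemisimple}), then $A$ has an $H_4$-invariant (in fact zero) Jacobson radical, so the results of the author in~\cite{ASGordienko8, ASGordienko9} apply and give $\PIexp^{H_4}(A)=\dim A$ together with the required polynomial lower bound; here one only has to check the hypotheses of those theorems, chiefly that $A$ is $H_4$-simple with $H_4$-invariant radical, which is immediate. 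The genuinely new case is the non-semisimple one of Theorem~\ref{TheoremSweedlerNonSemiSimple}, where $A = vJ \oplus J$ with $J^2=0$, $J$ the radical, $vJ$ a $\mathbb Z_2$-simple algebra, and $\varphi\colon vJ\to J$ the linear isomorphism with $v\varphi(b)=b$, $a\varphi(b)=\varphi((ca)b)$, $\varphi(a)b=\varphi(ab)$. Here the Jacobson radical $J$ is \emph{not} $H_4$-invariant (indeed $vJ\not\subseteq J$), so the cited theorems do not apply directly and one must argue by hand.

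For the non-semisimple case the strategy is to build, for each $n$, an explicit large family of multilinear $H_4$-polynomials that are linearly independent modulo $\Id^{H_4}(A)$. The key structural observation is that $B:=vJ$ is a $\mathbb Z_2$-simple, hence semisimple, algebra of dimension $\tfrac12\dim A$, and it sits inside $A$ as a subalgebra on which $v$ acts as a skew-derivation whose square is zero; moreover one can recover all of $A$ from $B$ together with the operator $v$, since $\varphi(b)=v$-preimage of $b$ and $A=B\oplus vB$ ... wait, $A=B\oplus J$ with $J=\varphi(B)$, and $\varphi(b)$ is obtained from $b$ by a single application of the ``integration'' inverse to $v$. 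Concretely: take the known asymptotically optimal multilinear $B$-identities of the semisimple algebra $B$ (for $B$ alone, which is covered by Giambruno--Zaicev / the graded analog, giving $\PIexp(B)=\dim B$ with a polynomial correction), and ``thicken'' each variable $x_i$ by replacing it with a pair of slots, one carrying a plain letter and one carrying a $v$-letter $x_i^{v}$; using the relations $a\varphi(b)=\varphi((ca)b)$ and $\varphi(a)b=\varphi(ab)$ one shows such thickened polynomials evaluated on $A$ detect products in $B$ of length up to $2n$, contributing the extra factor that raises $\dim B$ to $\dim A = 2\dim B$ in the exponent. Making this precise requires (i) a careful choice of which slots carry $c$ versus $1$ versus $v$ so that the evaluation lands in $J$ rather than dying, and (ii) a linear-independence argument, via a suitable substitution $x_i\mapsto$ matrix units of $B$ plus their $\varphi$-images, that separates the constructed polynomials; this is exactly the place where the representation-theoretic machinery of $S_n$-cocharacters from~\cite{ASGordienko8} should be invoked, working with the $H_4$-action translated into a $\mathbb Z_2$-grading plus nilpotent skew-derivation.

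The main obstacle I expect is precisely step (ii) of the last paragraph for the non-semisimple case: proving the \emph{sharp} lower bound $Cn^r(\dim A)^n$ rather than merely $C(\dim A - \varepsilon)^n$. One must show that the presence of the nilpotent part $J$ genuinely doubles the base of the exponential and is not ``invisible'' to $H_4$-identities --- equivalently, that the skew-derivation $v$ linking $B$ to $J$ supplies enough extra evaluations. I would handle this by exhibiting an explicit ``alternating in two disjoint sets of variables'' polynomial modelled on the Regev/Giambruno--Zaicev construction for $B$, interleaved with $v$-letters so that one set of variables is evaluated in $B$ and its image under $\varphi$ simultaneously; the identity $v\varphi(b)=b$ guarantees that applying $v$ to the whole monomial returns a nonzero element of $B$, which lets us transfer nonvanishing from $B$ to $A$. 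The bookkeeping of signs coming from the $\mathbb Z_2$-grading (recall $c\varphi(a) = -\varphi(ca)$) and from $v^2=0$ is delicate but mechanical once the combinatorial skeleton is fixed.
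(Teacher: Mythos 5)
Your proposal follows essentially the same route as the paper: the upper bound is the standard embedding of $P^{H_4}_n/(P^{H_4}_n\cap\Id^{H_4}(A))$ into the space of $n$-linear maps $A^{\times n}\to A$, the semisimple cases are delegated to the author's earlier results, and in the non-semisimple case one uses the decomposition $A=vJ\oplus J$ to decorate half of the variables of a full-strength alternating non-identity of the $\mathbb Z_2$-simple algebra $vJ$ with $v$-letters, so that $v^2=0$ and $v\varphi(b)=b$ transfer nonvanishing from $vJ$ to $A$, after which the standard cocharacter machinery yields the lower bound. The one point to sharpen is that you need a non-identity alternating in $\Theta(n)$ disjoint sets each of size $\dim A=2\dim(vJ)$ (obtained in the paper by merging pairs of $\ell$-element alternating sets of the $vJ$-polynomial into $2\ell$-element sets, half plain and half $v$-decorated), not merely ``two disjoint sets'' as in your last paragraph.
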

\begin{corollary}
The analog of Amitsur's conjecture holds
 for such codimensions. In particular, $\PIexp^H(A)=\dim A$.
\end{corollary}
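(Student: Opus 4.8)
The plan is to prove the upper and lower bounds separately, using the structure theorems of Sections~\ref{SectionSweedlerSimpleMatrix}--\ref{SectionSweedlerSimpleNonSemisimple} to reduce to concrete $H_4$-module algebras. The upper bound $c^{H_4}_n(A) \leqslant (\dim A)^{n+1}$ is the easy half: since $H_4$ is finite dimensional with basis $\{1,c,v,cv\}$, the action of any element of $H_4$ on $A$ is a linear combination of the four operators $\id, c, v, cv$ on $A$, so every multilinear $H_4$-polynomial evaluated on $A$ is a linear combination of ordinary multilinear expressions in the transformed arguments. Equivalently, after fixing a basis of $A$, the space $P^{H_4}_n/(P^{H_4}_n\cap\Id^{H_4}(A))$ embeds into the space of multilinear functions $A^n \to A$, which has dimension $(\dim A)^n \cdot \dim A$; hence $c^{H_4}_n(A)\leqslant (\dim A)^{n+1}$. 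This argument is standard (cf.~\cite{ASGordienko8}) and uses only finite dimensionality of $H_4$ and of $A$.

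For the lower bound $c^{H_4}_n(A)\geqslant C n^r (\dim A)^n$, I would follow the by-now classical route of Giambruno--Zaicev adapted to the $H$-setting~\cite{ASGordienko8, ASGordienko9}: one exhibits, for infinitely many $n$ (and then fills in the gaps by a standard argument), a multilinear $H_4$-polynomial that is \emph{not} an identity but is \emph{alternating} in $\dim A$ disjoint sets of variables, each of size $\dim A$; the $S_{\dim A}$-action on such alternations forces the codimension growth to be at least $(\dim A)^n$ up to a polynomial factor $n^r$. Concretely, one splits $A = A_{ss}\oplus J$ where $A_{ss}$ is a maximal $\mathbb Z_2$-simple (hence semisimple) subalgebra and $J$ is the radical (with $J^2=0$ in the non-semisimple case by Theorem~\ref{TheoremSweedlerNonSemiSimple}); using a regular semisimple element and the isomorphism $\varphi\colon vJ\to J$ with $v\varphi(b)=b$, one builds an $H_4$-polynomial whose value equals (a nonzero scalar times) a fixed matrix unit, obtained by a product running through a basis of $A_{ss}$ and then "lifting" into $J$ via the element $v$ acting on one distinguished variable. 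The key identities $a\varphi(b)=\varphi((ca)b)$ and $\varphi(a)b=\varphi(ab)$ guarantee that the radical part multiplies "transparently", so that the full $\dim A = \dim A_{ss} + \dim J = 2\dim A_{ss}$ worth of alternating slots can be realized.

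The main obstacle is the lower bound in the non-semisimple case, specifically showing that one can alternate on the \emph{full} $\dim A$ variables and not merely on $\dim A_{ss}$. The point is that a naive polynomial only sees the semisimple part; to capture $J$ one must use that $J = vA_{ss}$ (more precisely $J=vJ$ and $vJ$ is $\mathbb Z_2$-isomorphic to $A_{ss}$), so that applying $v$ to a product of $\dim A_{ss}$ semisimple-type evaluations lands in $J$ and can be multiplied against another block of $\dim A_{ss}$ semisimple-type evaluations — the relations in Theorem~\ref{TheoremSweedlerNonSemiSimple} make this bookkeeping work and, crucially, keep the resulting value nonzero. One must be careful that the $\mathbb Z_2$-grading is respected: since $\varphi$ swaps the grading components ($\varphi((vJ)^{(0)})=J^{(1)}$ etc.), the parities of the substituted elements have to be tracked, but this only affects signs and the choice of which matrix units to use, not the dimension count. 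Once a non-identity multilinear polynomial alternating in $\dim A$ sets of $\dim A$ variables is in hand, the cocharacter/hook-length estimate of~\cite[Chapter 6]{ZaiGia}, transplanted to the $H$-action context exactly as in~\cite{ASGordienko8}, yields $c^{H_4}_n(A)\geqslant C n^r(\dim A)^n$, and combined with the upper bound this proves the theorem; the Corollary is then immediate by taking $n$th roots.
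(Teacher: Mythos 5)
Your overall strategy is the paper's: the upper bound $c^{H_4}_n(A)\leqslant(\dim A)^{n+1}$ comes from embedding $P^{H_4}_n/(P^{H_4}_n\cap\Id^{H_4}(A))$ into the space of multilinear maps $A^n\to A$, and the lower bound from producing a multilinear non-identity alternating in roughly $\left[\frac{n-n_0}{\dim A}\right]$ disjoint sets of $\dim A$ variables and then invoking the Giambruno--Zaicev machinery as transplanted in~\cite{ASGordienko3}; the Corollary then follows from the two-sided bound of Theorem~\ref{TheoremSweedlerSimpleAmitsur} by taking $n$th roots, and the semisimple case is a direct citation.

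However, your description of the key construction in the non-semisimple case is backwards, and this is exactly the step carrying the new content (Lemma~\ref{LemmaSweedlerNSSAltHPolynomial}). By Theorem~\ref{TheoremSweedlerNonSemiSimple} the semisimple part of $A$ is $vJ$, the map $v$ restricted to $J$ is a linear isomorphism onto $vJ$, and $v(vJ)=v^2J=0$; hence $vA=vJ$ and nothing ever ``lands in $J$'' under $v$. In particular your claims $J=vA_{ss}$ and $J=vJ$ are false: $vA_{ss}=v(vJ)=0$. The correct mechanism is the opposite of what you describe: one starts from the graded alternating polynomial $f_0$ for the $\mathbb Z_2$-simple algebra $vJ$ (alternating in $2k$ sets of $\ell=\dim vJ$ variables), attaches the superscript $v$ to half of the variables, and substitutes a basis $a_1,\dots,a_\ell$ of $J$ into the $v$-superscripted slots so that they enter the computation as $va_1,\dots,va_\ell\in vJ$; the whole evaluation then takes place inside $vJ$. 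The point your sketch does not address is why the alternation over the \emph{combined} set of $2\ell$ variables does not annihilate the value: it survives precisely because $v^2=0$, so every term of the alternation that places an element of $vJ$ into a $v$-superscripted slot vanishes, and the surviving terms add up coherently to $(\ell!)^{2k}1_A\ne 0$. Without this observation the enlargement from $\ell$ to $2\ell=\dim A$ alternating variables --- and hence the exponent $\dim A$ rather than $\dim vJ$ --- is not justified.
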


In order to prove Theorem~\ref{TheoremSweedlerSimpleAmitsur}, we need one lemma.

Let $k\ell \leqslant n$ where $k,n \in \mathbb N$ are some numbers.
 Denote by $Q^H_{\ell,k,n} \subseteq P^H_n$
the subspace spanned by all $H$-polynomials that are alternating in
$k$ disjoint subsets of variables $\{x^i_1, \ldots, x^i_\ell \}
\subseteq \lbrace x_1, x_2, \ldots, x_n\rbrace$, $1 \leqslant i \leqslant k$.

\begin{lemma}\label{LemmaSweedlerNSSAltHPolynomial}
Let $A$ be an $H_4$-simple non-semisimple associative algebra over an 
algebraically closed field $F$
of characteristic $0$, $\dim A=2\ell$.
Then there exists a number $n_0 \in \mathbb N$ such that for every $n\geqslant n_0$
there exist disjoint subsets $X_1$, \ldots, $X_k \subseteq \lbrace x_1, \ldots, x_n
\rbrace$, $k = \left[\frac{n-n_0}{2\ell}\right]$,
$|X_1| = \ldots = |X_{k}|=2\ell$ and a polynomial $f \in P^{H_4}_n \backslash
\Id^{H_4}(A)$ alternating in the variables of each set $X_j$.
\end{lemma}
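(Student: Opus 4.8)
The plan is to build the required polynomial $f$ by gluing together many copies of a fixed "local" polynomial that detects the structure of $A$ on each block of $2\ell$ variables, and then to use a standard argument to produce an alternating version. By Theorem~\ref{TheoremSweedlerNonSemiSimple}, $A = vJ \oplus J$ with $J^2 = 0$, $vJ$ a $\mathbb{Z}_2$-simple semisimple algebra, $\dim vJ = \dim J = \ell$, and the isomorphism $\varphi\colon vJ \to J$ with $v\varphi(b) = b$, $a\varphi(b) = \varphi((ca)b)$, $\varphi(a)b = \varphi(ab)$. First I would fix a basis of $A$ adapted to the decomposition $A = (vJ)^{(0)} \oplus (vJ)^{(1)} \oplus J^{(0)} \oplus J^{(1)}$, and I would record that multiplication on the subalgebra $vJ$, being $\mathbb{Z}_2$-simple, is "full" in the sense that products of basis elements span it; concretely, since $vJ$ is semisimple I can pick a system of matrix units (or, in the split $M_k \oplus M_k$ case, a pair of such systems permuted by $c$) so that any basis element is a product of two others.

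Next I would construct a multilinear $H_4$-polynomial $g_0$ on a single block of $2\ell$ variables such that $g_0$ is nonzero on $A$ when the $\ell$ "first-type" variables are evaluated in $vJ$ running over a basis and the $\ell$ "second-type" variables are evaluated in $J$ via $\varphi$ of that same basis, and such that $g_0$ takes its value in $J$ (not just in $A$), so that two consecutive blocks can be multiplied without annihilating. The point is that inside $vJ$ one has an alternating polynomial (essentially the one coming from the regular representation / from a Capelli-type or central polynomial argument for the semisimple algebra $vJ$, whose existence is classical — this is exactly how Giambruno–Zaicev–type lower bounds work, cf.~\cite{ZaiGia}) that is nonzero under such an evaluation; I transport $\ell$ of the slots across $\varphi$ using the relations $\varphi(a)b = \varphi(ab)$ and $a\varphi(b) = \varphi((ca)b)$ and the fact that the $v$-action recovers $vJ$ from $J$. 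Then I would take $f$ to be a product $g_1 g_2 \cdots g_k$ of such blocks (with disjoint variable sets $X_1, \ldots, X_k$, each of size $2\ell$), padded by $n_0 - $ (bounded number of) extra "filler" variables $x$ evaluated at a convenient element, so that $n = 2\ell k + n_0$ with $n_0$ fixed; since each $g_j$ evaluates into $J$ and $J^2 = 0$, I must instead interleave: alternate one factor landing in $vJ$ and one in $J$, or multiply a $vJ$-valued block by a $J$-valued block, using $(vJ)\cdot J \subseteq J$, so that the total product is a single nonzero element of $J$ (or of $vJ$). Finally, to get a polynomial that is honestly \emph{alternating} in each $X_j$ rather than merely nonzero on one ordered evaluation, I apply the standard alternization operator: since $g_j$ is nonzero on the chosen tuple of $2\ell$ distinct basis-or-$\varphi$-of-basis elements (a set of $2\ell = \dim A$ linearly independent vectors), the alternating sum $\sum_{\sigma \in S_{X_j}} (\operatorname{sign}\sigma)\, \sigma g_j$ is nonzero on that same tuple — a Vandermonde/linear-independence argument — and alternating by construction.

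The main obstacle I expect is the bookkeeping of \emph{where each partial product lands}: because $J^2 = 0$, one cannot simply multiply $k$ arbitrary $H_4$-polynomials and hope for a nonzero value, so the construction of $g_0$ and the way the blocks are chained must be arranged so that at each stage the running product is either in $vJ$ or in $J$, and transitions $vJ \to J$ happen via multiplication by an element of $J$ on the correct side (using $\varphi(a)b = \varphi(ab)$, which realizes left multiplication by $J$ on $vJ$ faithfully), while the $vJ \to vJ$ steps exploit semisimplicity. Getting a single block to be both nonzero and $J$-valued (resp. $vJ$-valued) while alternating in all $2\ell$ of its variables — half of which are "forced" to be evaluated in $J$ and half in $vJ$ — is the delicate point, and it is there that the explicit relations of Theorem~\ref{TheoremSweedlerNonSemiSimple} together with the existence of alternating nonidentities for the semisimple algebra $vJ$ do the real work. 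Once a single good block exists, tensoring up to $k = \left[\frac{n-n_0}{2\ell}\right]$ blocks and alternizing is routine, and the filler variables absorb the residue $n - 2\ell k$ into a fixed $n_0$.
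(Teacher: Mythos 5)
Your starting point is the right one (the decomposition $A=vJ\oplus J$ from Theorem~\ref{TheoremSweedlerNonSemiSimple}, an alternating non-identity for the $\mathbb Z_2$-simple part $vJ$, and an evaluation sending half of each block into $vJ$ and half into $J$), but the two steps that actually make the lemma work are missing, and one of them is replaced by an argument that fails. The fatal point is your final alternization step. It is not true that alternating a polynomial which is nonzero on a linearly independent $2\ell$-tuple yields a nonzero polynomial; there is no ``Vandermonde'' principle here, and in this situation the cancellation is real. Indeed, for any \emph{ordinary} multilinear polynomial alternating in $2\ell=\dim A$ variables, every monomial in the basis evaluation is a product of all $2\ell$ basis elements, hence contains at least two factors from $J$; since $J$ is an ideal with $J^2=0$, every such product is zero, so every ordinary polynomial alternating in $\dim A$ variables is an identity of $A$. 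The alternation can only survive if the $H_4$-structure of the free algebra is used inside the polynomial: the paper replaces the $\ell$ variables destined for $J$ by $x^v$, so that under $x\mapsto a_i\in J$ the slot contributes $va_i\in vJ$, while every cross term of the big alternation (a $vJ$-element landing in a $v$-decorated slot) is killed by $v^2=0$, and the surviving $(\ell!)^2$ terms per block all carry sign $+1$ because the underlying polynomial was already alternating in each half. Your text gestures at ``the $v$-action recovers $vJ$ from $J$'' but never puts $v$ on the variables of $f$, which is the whole mechanism.

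The second gap is the source and shape of the multialternating polynomial. You propose to build one good block and then chain $k$ copies by multiplication, and you correctly observe that $J^2=0$ obstructs this; but the proposed fix (interleaving $vJ$-valued and $J$-valued blocks) does not resolve it, since a block alternating in $2\ell$ variables must see $\ell$ arguments from $J$ and, without $v$-decorations, its value is forced into $J$, so at most one such block can appear in a nonzero product. The paper sidesteps chaining entirely: it invokes \cite[Theorem~7]{ASGordienko3} to produce, for every $k$, a \emph{single} $\mathbb Z_2$-polynomial $f_0$ with $2k$ alternating sets of size $\ell=\dim vJ$ and finitely many fixed extra arguments $\bar z_1,\dots,\bar z_T$ (whence $n_0=T+1$), which realizes the identity map on $vJ$; the $k$ sets of size $2\ell$ are then obtained by fusing pairs of the size-$\ell$ sets after inserting the $v$'s as above, and the leftover degree is absorbed by unit-evaluated variables. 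The existence of such an $f_0$ with arbitrarily many alternating sets is a genuine theorem about $G$-simple algebras, not the single-block Capelli/central polynomial fact you cite, and without it the quantifier ``for every $n\geqslant n_0$ with $k=\left[\frac{n-n_0}{2\ell}\right]$'' is not reached.
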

\begin{proof}
By Theorem~\ref{TheoremSweedlerNonSemiSimple}, $A=vJ\oplus J$ (direct sum of subspaces)
where $J^2=0$ and $vJ$ is a $\mathbb Z_2$-simple subalgebra.

Fix the basis $a_1, \ldots, a_\ell, va_1, \ldots, va_\ell$
in $A$ where $a_1, \ldots, a_\ell$ is a basis in $J$.

Since $vJ$ is a $\mathbb Z_2$-simple subalgebra, by~\cite[Theorem~7]{ASGordienko3}, there exist $T \in \mathbb Z_+$ and  $\bar z_1, \ldots, \bar z_T \in vJ$ such that
for any $k \in \mathbb N$
there exists $$f_0=f_0(x_1^1, \ldots, x_\ell^1; \ldots;
x^{2k}_1, \ldots,  x^{2k}_\ell;\ z_1, \ldots, z_T;\ z) \in Q^{F\mathbb Z_2}_{\ell, 2k, 2k\ell+T+1}$$
such that for any $\bar z \in vJ$ we have
$f_0(va_1, \ldots, va_\ell; \ldots;
va_1, \ldots, va_\ell; \bar z_1, \ldots, \bar z_T; \bar z) = \bar z$.

Take $n_0=T+1$, $k=\left[\frac{n-n_0}{2\ell}\right]$, and consider \begin{equation*}\begin{split}f(x_1^1, \ldots, x_{2\ell}^1; \ldots;
x^{k}_1, \ldots,  x^{k}_{2\ell};\ z_1, \ldots, z_T; \ z;\ y_1, \ldots, y_{n-2k\ell-T-1})=\\
\Alt_1 \Alt_2 \ldots \Alt_k
f_0(x_1^1, \ldots, x_\ell^1; \left(x_{\ell+1}^1\right)^v, \ldots, \left(x_{2\ell}^1\right)^v;
 \ldots; \\
x^{k}_1, \ldots,  x^{k}_\ell; \left(x_{\ell+1}^k\right)^v, \ldots, \left(x_{2\ell}^k\right)^v;\ z_1, \ldots, z_T;\ z)\ y_1 y_2 \ldots y_{n-2k\ell-T-1}\in P^{H_4}_n\end{split}\end{equation*}
where $\Alt_i$ is the operator of alternation on the set $X_i:=\lbrace 
x_1^i, \ldots, x_{2\ell}^i\rbrace$.

Now we notice that $$f(va_1, \ldots, va_{\ell}, a_1, \ldots, a_{\ell}; \ldots;
va_1, \ldots, va_{\ell}, a_1, \ldots, a_{\ell};\ \bar z_1, \ldots, \bar z_T; \ 1_A, \ldots, 1_A)
=(\ell!)^{2k} 1_A$$ since $v^2=0$.
The lemma is proved.
\end{proof}

\begin{proof}[Proof of Theorem~\ref{TheoremSweedlerSimpleAmitsur}.]
If $A$ is semisimple, then Theorem~\ref{TheoremSweedlerSimpleAmitsur}
follows from~\cite[Theorem~5]{ASGordienko3}. If $A$ is not semisimple, we 
repeat verbatim the proof of~\cite[Lemma~11 and Theorem~5]{ASGordienko3}
using Lemma~\ref{LemmaSweedlerNSSAltHPolynomial} instead of~\cite[Lemma~10]{ASGordienko3}
and~\cite[Lemma~4]{ASGordienko3} instead of~\cite[Theorem~6]{ASGordienko3}.
\end{proof}

\section*{Acknowledgements}

I am grateful to E.~Jespers, V.\,K.~Kharchenko, and M.\,V.~Zaicev for helpful discussions.

\end{document}